\documentclass[12pt]{amsart}
\usepackage{amssymb,mathrsfs,longtable}
\usepackage[matrix,arrow,curve]{xy}
\usepackage{setspace}
\usepackage{multirow}
\usepackage{epsfig,color}
\usepackage{textcomp}
\usepackage{url}

\sloppy \pagestyle{plain}
\thispagestyle{empty}

\textwidth=16cm \textheight=23cm \oddsidemargin=0cm
\evensidemargin=0cm \topmargin=-20pt

\newcounter{cequation}[section]

\pagenumbering{arabic}

\newtheorem{theorem}[cequation]{Theorem}
\newtheorem*{theorem*}{Theorem}
\newtheorem{lemma}[cequation]{Lemma}
\newtheorem{corollary}[cequation]{Corollary}
\newtheorem{conjecture}[cequation]{Conjecture}
\newtheorem{question}[cequation]{Question}
\newtheorem{proposition}[cequation]{Proposition}

\theoremstyle{definition}
\newtheorem{example}[cequation]{Example}
\newtheorem{definition}[cequation]{Definition}
\newtheorem*{definition*}{Definition}

\theoremstyle{remark}
\newtheorem{remark}[cequation]{Remark}

\renewcommand\arraystretch{1.3}

\makeatletter\@addtoreset{equation}{section}
\makeatletter\@addtoreset{section}{part}

\makeatother


\def \CC {\mathbb{C}}

\def \P {\mathbb{P}}
\def \PP {\mathbb{P}}

\def \Z {\mathbb{Z}}

\def \Aff {\mathbb{A}}


\def \lcm {\mathrm{lcm}\,}


\def \ge {\geqslant}
\def \le {\leqslant}


\title{Nef partitions for codimension $2$ weighted complete intersections}

\author{Victor Przyjalkowski and Constantin Shramov}

\address{\emph{Victor Przyjalkowski}
\newline
\textnormal{Steklov Mathematical Institute of RAS, 8 Gubkina street, Moscow 119991, Russia.
}
\newline
\textnormal{National Research University Higher School of Economics, Russian Federation,
Laboratory of Mirror Symmetry, NRU HSE, 6 Usacheva str., Moscow, Russia, 119048.
}
\newline
\textnormal{\texttt{victorprz@mi.ras.ru, victorprz@gmail.com}}}

\address{\emph{Constantin Shramov}
\newline
\textnormal{Steklov Mathematical Institute of RAS,
8 Gubkina street, Moscow 119991, Russia.
}
\newline
\textnormal{National Research University Higher School of Economics, Laboratory of Algebraic Geometry, 6 Usacheva str., Moscow, 119048, Russia.
}
\newline
\textnormal{\texttt{costya.shramov@gmail.com}}}

\thanks{Victor Przyjalkowski was partially supported by Laboratory of Mirror Symmetry NRU HSE, RF Government grant, ag. \textnumero~14.641.31.0001.
Constantin Shramov was supported by the Program of the Presidium of
 the Russian Academy of Sciences \textnumero~01 ``Fundamental Mathematics and
 its Applications'' under grant PRAS-18-01,
by the Russian Academic Excellence Project ``5-100'',
by RFBR grants 15-01-02164 and 15-01-02158, and by the Foundation for the
Advancement of Theoretical Physics and Mathematics ``BASIS''.
Both authors are Young Russian Mathematics award winners and would like to thank
its sponsors and jury.
}


\begin{document}

\begin{abstract}
We prove that a smooth well formed Fano weighted complete intersection of codimension $2$ has a nef partition.
We discuss applications of this fact to Mirror Symmetry.
In particular we list all nef partitions
for smooth well formed Fano weighted complete intersections of dimensions $4$ and $5$ and present weak Landau--Ginzburg
models for them.

\end{abstract}

\maketitle

\section{Introduction}

In~\cite{Gi97} (see also~\cite{HV00}) Givental
defined a Landau--Ginzburg model for a Fano complete intersection $X$
in a smooth toric variety. This Landau--Ginzburg model is a precisely described quasi-projective
family over $\Aff^1$.
Givental proved that an \emph{$I$-series} for~$X$,
that is a generating series of genus $0$ one-pointed Gromov--Witten invariants that count rational curves lying on~$X$,
provides a solution of Picard--Fuchs equation of the Landau--Ginzburg model.
Givental's construction may be used for smooth well formed complete intersections in weighted projective spaces
(as well as it is expected to work for complete intersections in varieties that admit ``good'' toric degenerations
like Grassmannians, see~\cite{Ba04} and~\cite{BCFKS98})
in the same way as for complete intersections in smooth toric varieties,
see~\S\ref{section:NEF} below for details.

The key ingredient in Givental's construction is a notion of \emph{nef partition}.
Let us describe it for the case we are mostly interested in, that is for complete intersections in weighted projective spaces
(we refer the reader to~\cite{Do82} and~\cite{IF00} for the definitions and basic information about weighted projective spaces and complete intersections therein). Let $X$ be a smooth well formed Fano complete intersection of hypersurfaces of degrees $d_1,\ldots,d_c$ in $\PP(a_0,\ldots,a_n)$.

\begin{definition}
\label{definition:nef partition}
A \emph{nef partition} for the complete intersection $X$ is a splitting
$$
\{0,\ldots,n\}=S_0\sqcup S_1\sqcup\ldots\sqcup S_c
$$
such that $\sum_{j\in S_i} a_j=d_i$ for every $i=1,\ldots,c$.
The nef partition is called \emph{nice} if there exists an index $r\in S_0$ such that $a_r=1$.
\end{definition}

Given a nef partition, one can easily write down Givental's Landau--Ginzburg model. Moreover, if the nef partition is nice, one
can birationally represent it as a complex torus with a function on it, that is just a Laurent polynomial, see \S\ref{section:NEF}, which
we call $f_X$.
Such Laurent polynomials are called \emph{weak Landau--Ginzburg models}.
This way of presenting Landau--Ginzburg models has many advantages.
``Good'' weak Landau--Ginzburg models are expected to have
\emph{Calabi--Yau compactifications}. As a result one gets  Landau--Ginzburg models from Homological Mirror Symmetry point of view.
Another expectation is that $f_X$
can be related to a toric degeneration of $X$ via its Newton polytope. If both expectations hold, $f_X$ is called a \emph{toric Landau--Ginzburg model}, see for instance~\cite{Prz13} for
more details.

It appears (see~\S\ref{section:NEF}) that a crucial ingredient for the construction of Givental's Landau--Ginzburg model for a weighted Fano complete intersection
is the existence of a nef partition, and a crucial ingredient for the construction of toric Landau--Ginzburg model
is the existence of a nice nef partition. In~\cite{Prz11} this was shown for complete intersections of Cartier divisors in weighted projective spaces.
In particular,~\cite{Prz11} implies the following.

\begin{theorem}
\label{theorem:hypersurfaces}
Let $X$ be a smooth well formed Fano weighted hypersurface.
Then there exists a nice nef partition for $X$, and $X$ has a toric Landau--Ginzburg model.
\end{theorem}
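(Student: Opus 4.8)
The plan is to reduce the statement to a purely numerical claim about the weights and then to establish that claim using quasi-smoothness and smoothness. Since $X$ is a hypersurface we have $c=1$, so by Definition~\ref{definition:nef partition} a nef partition is nothing but a subset $S_1\subseteq\{0,\ldots,n\}$ with $\sum_{j\in S_1}a_j=d$, and it is nice exactly when its complement $S_0$ contains an index $r$ with $a_r=1$. Once such a partition is produced, the existence of the toric Landau--Ginzburg model is supplied by the general construction recalled in \S\ref{section:NEF}. Thus everything reduces to showing that the weight system of a smooth well formed Fano weighted hypersurface admits a subset summing to $d$ whose complement still carries a coordinate of weight~$1$.

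First I would record the constraints imposed on $(a_0,\ldots,a_n;d)$. The Fano condition is $\sum_{i=0}^n a_i>d$. Quasi-smoothness, in the form of the Iano--Fletcher criterion, guarantees that for each $i$ the defining polynomial contains either a monomial $x_i^{b_i}$, forcing $a_i\mid d$, or a monomial $x_i^{b_i}x_{\tau(i)}$ with $\tau(i)\neq i$, forcing $a_i\mid d-a_{\tau(i)}$; excluding linear cones we may assume $d>a_i$ for every $i$. Finally smoothness means $X\cap\Sing\PP(a_0,\ldots,a_n)=\emptyset$, which together with well formedness is the strongest of the hypotheses and, as the degenerate case $X_6\subset\PP(1,1,5,5)$ (which fails to be smooth) illustrates, is exactly what rules out ``isolated'' large weights. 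From these I would extract that at least one weight equals $1$, and reorder so that $a_0\le a_1\le\ldots\le a_n$ with $a_0=1$.

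The construction of $S_1$ I would carry out greedily, separating the weights equal to $1$ from the larger ones. Writing $t+1$ for the number of weight-$1$ coordinates, it suffices to find a subset $T$ of the weights exceeding $1$ with $\sum_{i\in T}a_i$ lying in the window $[d-t,d]$: then $d-\sum_{i\in T}a_i\in\{0,\ldots,t\}$ of the weight-$1$ coordinates complete the sum to $d$, and since only at most $t$ of the $t+1$ available ones are used, at least one weight-$1$ coordinate is left for $S_0$, yielding niceness. To locate such a $T$ I would order the large weights and track the partial sums: a partial sum can fail to meet $[d-t,d]$ only if some large weight makes it jump across, that is only if a large weight exceeds $t+1$ and is added precisely at the critical step. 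The role of quasi-smoothness is then to reroute around such a jump, replacing the offending weight $a_i$ by the relation $a_i\mid d-a_{\tau(i)}$ coming from its monomial $x_i^{b_i}x_{\tau(i)}$.

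The main obstacle is precisely this interplay between large weights and a small supply of weight-$1$ coordinates: for generic weight data no subset sums to $d$ at all, so the argument must use the geometry, not merely the inequality $\sum a_i>d$. Concretely, the hard part is the case analysis showing that whenever a large weight would overshoot the window $[d-t,d]$, smoothness (and not merely quasi-smoothness) forbids the resulting configuration, as in the del Pezzo example $X_6\subset\PP(1,1,2,3)$, where the full set of large weights already lands in the window. I expect the bulk of the proof to be this verification that the divisibility relations $a_i\mid d-a_{\tau(i)}$, constrained by $X\cap\Sing\PP=\emptyset$, always allow a subset of the large weights to hit the target interval with a weight-$1$ coordinate to spare.
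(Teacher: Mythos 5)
There is a genuine gap: your proposal correctly reduces the theorem to a numerical statement about the weights, but it never actually proves that statement. You set up a greedy/window argument (find a subset $T$ of the non-unit weights with $\sum_{i\in T}a_i\in[d-t,d]$) and then explicitly defer the decisive step --- ruling out the case where a large weight ``jumps across'' the window --- to an unperformed case analysis based on the divisibility relations $a_i\mid d-a_{\tau(i)}$ coming from quasi-smoothness. As written, the argument could fail exactly there, and indeed the quasi-smoothness conditions alone are too weak to carry it: they do not by themselves prevent two non-unit weights from sharing a common factor, which is what would make the bookkeeping intractable. You also assert without proof that at least one weight equals $1$.

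The paper's route avoids all of this by using the stronger consequence of smoothness plus well-formedness recorded in \cite[Lemma~2.15]{PrzyalkowskiShramov-Weighted}: for a hypersurface ($c=1$), \emph{every} weight $a_i>1$ divides $d$, and \emph{any two} weights greater than $1$ are coprime (otherwise one would need two degrees sharing the common factor). Hence, in the language of \S\ref{section:WPgraphs}, the associated WP-graph $\Gamma$ has no edges, so
$$
\sum_{a_i>1}a_i=\Sigma\Gamma\le\lcm\Gamma=\prod_{a_i>1}a_i\le d,
$$
the middle inequality being trivial for pairwise coprime integers $\ge 2$ and the last one holding because the pairwise coprime divisors $a_i$ of $d$ have $\lcm$ dividing $d$. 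One then takes $V_1$ to be \emph{all} indices with $a_i>1$ and tops up with unit weights; the strict Fano inequality $\sum a_i>d$ guarantees both that enough unit weights exist to reach $d$ exactly and that at least one is left over for $S_0$, giving niceness. No window, no ordering of partial sums, and no case analysis on monomials is needed. To repair your proof you should replace the quasi-smoothness input by this coprimality-and-divisibility statement; with it, your ``hard part'' disappears because the sum of all non-unit weights already lands at or below $d$.
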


The main result of the present paper is the following.

\begin{theorem}
\label{theorem:main}
Let $X$ be a smooth well formed Fano weighted complete intersection of codimension $2$.
Then there exists a nice nef partition for $X$.
\end{theorem}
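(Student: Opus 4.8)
The plan is to reinterpret a nef partition concretely and then build it from the monomial data forced by smoothness. Writing $X=X_{d_1,d_2}\subset\PP(a_0,\dots,a_n)$ with weights ordered as $a_0\le\dots\le a_n$ and degrees $d_1\le d_2$, a nef partition is the same thing as a pair of \emph{disjoint square-free monomials} $\prod_{j\in S_1}x_j$ and $\prod_{j\in S_2}x_j$ of degrees $d_1$ and $d_2$ respectively; the partition is nice precisely when some variable of weight $1$ is left outside $S_1\cup S_2$. With this reformulation the Fano condition $a_0+\dots+a_n>d_1+d_2$ guarantees that, once disjoint $S_1,S_2$ with the correct sums are found, the complement $S_0$ is automatically nonempty (its weight-sum is positive), so the remaining work is to arrange that $S_0$ contains a weight-$1$ coordinate.

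First I would dispose of the reducible and degenerate cases. If $X$ is an intersection with a linear cone, i.e.\ $d_i=a_j$ for some $i,j$, then the usual elimination of the variable $x_j$ realizes $X$ as a smooth well formed Fano weighted hypersurface or as a codimension-$2$ complete intersection in fewer variables; here I can either induct on the number of variables or appeal directly to Theorem~\ref{theorem:hypersurfaces}, setting $S_i=\{j\}$ and recovering the other block from the hypersurface case. The few small-dimensional sporadic weight systems can likewise be checked by hand. So I may assume from now on that no degree equals a weight.

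The core step uses the monomial conditions coming from quasi-smoothness and well-formedness. For every variable $x_i$ these conditions supply a monomial of degree $d_1$ or $d_2$ that is either a pure power $x_i^{m}$ (so $a_i\mid d_1$ or $a_i\mid d_2$) or a ``pointed'' monomial $x_i^{m}x_{e(i)}$, and the conditions along coordinate strata supply enough such monomials that the large weights are tightly constrained by divisibility. I would then process the variables in order of decreasing weight, peeling off square-free monomials: the relations attached to the heaviest variables decide whether each such variable belongs to $S_1$, to $S_2$, or stays free, while I record the running deficits $d_1-\sum_{S_1}a_j$ and $d_2-\sum_{S_2}a_j$ and close each deficit with the abundant weight-$1$ variables. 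Because the weight-$1$ coordinates can be inserted one at a time, both target sums can be hit exactly provided the heavy part of each degree has already been matched by a disjoint choice of heavy variables.

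The main obstacle is exactly this matching of the heavy variables: the relations that smoothness hands me for $d_1$ and for $d_2$ may \emph{share} a variable, and I must untangle them into two disjoint supports while still hitting both degrees and keeping a weight-$1$ variable in reserve. I expect to handle this through a case analysis on how the heaviest weights divide $d_1$ and $d_2$ and on the number $k$ of weight-$1$ variables, using well-formedness to force the presence of coordinates of weight $1$ once $X$ is smooth and not a linear cone, and using the Fano inequality to guarantee that after the heavy variables are assigned there remain enough weight-$1$ variables both to balance the two sums and to leave one in $S_0$. Verifying that this balancing can always be carried out \emph{simultaneously} for $d_1$ and $d_2$, rather than for a single degree as in Theorem~\ref{theorem:hypersurfaces}, is the crux of the argument.
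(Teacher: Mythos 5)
Your reformulation of a nef partition as a pair of disjoint square-free monomials, and your use of the strict Fano inequality to pad each block with weight-$1$ variables and leave $S_0$ nonempty, are both consistent with what the paper does. But the argument stops exactly where the real difficulty begins: you yourself write that the simultaneous, disjoint assignment of the heavy (non-unit-weight) variables to the two degrees ``is the crux of the argument'' and that you ``expect to handle this through a case analysis,'' without carrying that analysis out. This is not a routine verification; the paper devotes all of \S\ref{section:WPgraphs} to it. The ingredients you are missing are the following. First, the correct input from smoothness is not merely a pure-power or pointed monomial for each variable, but the statement of \cite[Lemma~2.15]{PrzyalkowskiShramov-Weighted}: any $k$ weights with common divisor $\delta>1$ force $k$ of the degrees to be divisible by $\delta$. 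In codimension $2$ this says that any three weights are coprime and that any non-coprime \emph{pair} of weights forces \emph{both} $d_1$ and $d_2$ to be divisible by their gcd. Second, and crucially, one needs the quantitative fact that the non-unit weights can be split into two sets $V_1\sqcup V_2$ with $\lcm_{v\in V_i}a_v$ dividing $d_i$ and with $\lcm\ge\Sigma$ (the sum of the weights) on each part, which is what yields $\sum_{j\in V_i}a_j\le d_i$. That inequality is false in general: it fails when one weight divides a neighbouring one (the ``weak vertices'' of the paper) and for the triple $(6,10,15)$, and the proof requires an induction on the associated graph together with a device for distributing these exceptional configurations between the two degrees (possible because such configurations force divisibility of both $d_1$ and $d_2$). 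Your greedy scheme of peeling off the heaviest variable and recording the running deficits has no mechanism guaranteeing the deficits stay nonnegative; that is precisely what $\lcm\ge\Sigma$ provides.

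Two secondary points. Once all non-unit weights are placed into $S_1\cup S_2$, niceness is automatic: every index remaining in $S_0$ has weight $1$, and $S_0\ne\varnothing$ by the strict Fano inequality, so there is no separate step of ``keeping a weight-$1$ variable in reserve.'' Also, the preliminary reduction for intersections with linear cones is unnecessary and somewhat delicate (eliminating a variable changes the ambient weighted projective space and can affect well-formedness); the paper's argument treats all cases uniformly and never needs this reduction.
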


As it is discussed above, this result, together with~\cite{Gi97},~\cite{Prz08},~\cite{Prz13}, and~\cite{ILP13} (see also \S~\ref{section:NEF} below), gives the following.

\begin{corollary}
\label{corollary:main}
In the assumptions of Theorem~\ref{theorem:main} the complete intersection $X$ has a toric Landau--Ginzburg model.
\end{corollary}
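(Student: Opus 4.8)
The plan is to read off a toric Landau--Ginzburg model for $X$ directly from the nice nef partition provided by Theorem~\ref{theorem:main}, and then to verify, one at a time, the three properties that a Laurent polynomial must satisfy in order to be a toric Landau--Ginzburg model in the sense of \S\ref{section:NEF}: the period condition, compatibility with a toric degeneration of $X$, and the existence of a Calabi--Yau compactification. Since $X$ is smooth, well formed, Fano, and of codimension $2$, the corollary is an assembly of the cited inputs rather than a new geometric argument, so the work is in checking that each hypothesis of those inputs is met and in combining them correctly.

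First I would invoke Theorem~\ref{theorem:main} to fix a nice nef partition $\{0,\ldots,n\}=S_0\sqcup S_1\sqcup S_2$ for $X$, so that $\sum_{j\in S_i}a_j=d_i$ for $i=1,2$ and some index $r\in S_0$ carries weight $a_r=1$. Feeding this partition into Givental's construction~\cite{Gi97} produces a Landau--Ginzburg model for $X$ as a family over $\Aff^1$. Because the partition is nice, the weight-one coordinate indexed by $r$ can be solved for and the ambient space birationally trivialised to a complex torus, exactly as recalled in \S\ref{section:NEF}; the outcome is the Laurent polynomial $f_X$, the weak Landau--Ginzburg model attached to the partition. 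At this point $f_X$ is well defined, and it remains to upgrade it to a toric Landau--Ginzburg model.

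Next I would verify the period condition. By Givental's theorem~\cite{Gi97} the regularised $I$-series of $X$ is a period of his Landau--Ginzburg model, and under the reduction of \S\ref{section:NEF} this period is identified with the constant-terms series $\sum_{k\ge 0}c_k t^k$, where $c_k$ is the constant term of $f_X^k$; the resulting matching of the two series is precisely the period property in the form established in~\cite{Prz08} and~\cite{Prz13}. The toric compatibility I would then obtain from~\cite{ILP13} together with~\cite{Prz13}: the Newton polytope of $f_X$, assembled from the monomials of Givental's model, is the fan polytope of a toric variety to which $X$ admits an embedded degeneration, which is exactly the required relation between $f_X$ and a toric degeneration.

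The genuinely hard part, and the only step that is not essentially combinatorial, will be the existence of a Calabi--Yau compactification of the family $f_X\colon(\C^*)^{\dim X}\to\Aff^1$: one must exhibit a relative compactification whose total space is a smooth Calabi--Yau variety. Here I would appeal to the cited works, in particular~\cite{ILP13}, which supply such compactifications for the weak Landau--Ginzburg models arising from nef partitions of smooth well formed Fano weighted complete intersections. Once the period condition, the toric compatibility, and the Calabi--Yau compactification are in hand, the Laurent polynomial $f_X$ satisfies all the requirements of \S\ref{section:NEF}, so it is a toric Landau--Ginzburg model for $X$, which is the assertion of the corollary.
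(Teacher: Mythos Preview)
Your overall strategy matches the paper's: the corollary is not proved by a fresh geometric argument but is assembled from Theorem~\ref{theorem:main} together with the cited inputs \cite{Gi97}, \cite{Prz08}, \cite{Prz13}, \cite{ILP13}, exactly as the paper announces in the sentence preceding the corollary. Your account of the period step and of the toric degeneration step is accurate and assigns the right references.

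The gap is in the Calabi--Yau compactification step. You write that \cite{ILP13} ``supply such compactifications for the weak Landau--Ginzburg models arising from nef partitions of smooth well formed Fano weighted complete intersections,'' but that is not what \cite{ILP13} does: that paper produces the toric degeneration of $X$ whose fan polytope matches the Newton polytope of $f_X$, i.e.\ precisely the toric-compatibility ingredient you already used in the previous paragraph. The paper's own discussion in \S\ref{section:NEF} makes this division of labour explicit, and it attributes the compactification statement instead to reflexivity of the Newton polytope via \cite{Prz17}, while noting that reflexivity ``is not common for weighted complete intersections in weighted projective spaces with non-trivial weights.'' The closing Remark in \S\ref{section:4-folds} reinforces this: Calabi--Yau (or log Calabi--Yau) compactifications are verified only for certain entries of the tables, not uniformly. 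So appealing to \cite{ILP13} here is a misattribution, and the blanket claim that the cited works furnish Calabi--Yau compactifications for all such $X$ overstates what is actually available; at a minimum you should route this step through \cite{Prz13} and the framework it sets up, as the paper does, rather than through \cite{ILP13}.
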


Keeping in mind Theorems~\ref{theorem:hypersurfaces} and~\ref{theorem:main} (together with Corollary~\ref{corollary:main}),
we believe that the following is true.

\begin{conjecture}
\label{conjecture:main}
Let $X$ be a smooth well formed Fano weighted complete intersection.
Then there exists a nice nef partition for $X$, and $X$ has a toric Landau--Ginzburg model.
\end{conjecture}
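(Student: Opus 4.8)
The plan is to prove the statement by induction on the codimension $c$, taking Theorem~\ref{theorem:hypersurfaces} (the case $c=1$) and Theorem~\ref{theorem:main} (the case $c=2$) as the base of the induction. Throughout, write $X=X_{d_1,\ldots,d_c}\subset\PP(a_0,\ldots,a_n)$ and set $i_X=\sum_{j=0}^n a_j-\sum_{i=1}^c d_i$ for the Fano index, so that the Fano hypothesis is exactly $i_X>0$. A nef partition in the sense of Definition~\ref{definition:nef partition} is then nothing but a way to write each degree $d_i$ as a sum $\sum_{j\in S_i}a_j$ over disjoint index sets $S_1,\ldots,S_c$; the leftover set $S_0$ is automatically nonempty, with $\sum_{j\in S_0}a_j=i_X>0$. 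Thus the whole question is a subset-sum problem constrained by the numerical conditions that smoothness and well-formedness impose on the tuple $(a_0,\ldots,a_n;d_1,\ldots,d_c)$. The first step is therefore to assemble those conditions (of Iano--Fletcher type): well-formedness forces $\gcd$ conditions among the weights, and quasismoothness together with smoothness forces, for every weight $a_j$, strong divisibility and ``monomial existence'' relations linking $a_j$ to the degrees $d_i$.

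The inductive step is to split off a single degree. Concretely, I would single out one degree $d_c$ (a natural candidate is the largest degree, or one that the smoothness conditions force to be a sum of weights) and produce a subset $S_c\subset\{0,\ldots,n\}$ with $\sum_{j\in S_c}a_j=d_c$. Removing the weights indexed by $S_c$ and the degree $d_c$ leaves the data consisting of the weights $(a_j)_{j\notin S_c}$ and the degrees $d_1,\ldots,d_{c-1}$; the goal is to verify that this data again defines a smooth well formed Fano weighted complete intersection $X'$ of codimension $c-1$, so that the induction hypothesis supplies a nice nef partition $S_0'\sqcup S_1\sqcup\ldots\sqcup S_{c-1}$ for $X'$. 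Setting $S_0=S_0'$ and adjoining $S_c$ then yields a nef partition for $X$. Verifying that $X'$ inherits the three properties is the technical heart of the step: Fano-ness is immediate because a direct computation gives $i_{X'}=i_X$, but preserving well-formedness (the $\gcd$ conditions among the surviving weights) and quasismoothness after deleting the weights of $S_c$ requires care, and is precisely where one must use the full strength of the Iano--Fletcher conditions rather than mere numerics.

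For the \emph{nice} part I would argue separately that a smooth well formed Fano weighted complete intersection always carries a weight equal to $1$ (this already underlies the nice nef partitions in codimensions $1$ and $2$), and that the splitting above can be arranged so that such a weight is never consumed by any $S_i$ with $i\ge 1$ and hence survives in $S_0$. Where a naive splitting would place the only unit weight into some $S_i$, one replaces it within $S_i$ by an equal-sum combination of other weights, using the divisibility structure, and moves the unit into the leftover set. Once a nice nef partition is available, the promotion to a toric Landau--Ginzburg model is automatic by the same chain of results (Givental's construction together with~\cite{Prz08},~\cite{Prz13},~\cite{ILP13}) that already yields Corollary~\ref{corollary:main} in codimension $2$.

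The main obstacle is the splitting lemma in the inductive step: showing that for \emph{some} choice of the distinguished degree there is a subset of weights summing exactly to it whose removal respects well-formedness and quasismoothness. In low codimension one can enumerate the finitely many numerical types by hand, but for general $c$ the smoothness conditions couple all of the degrees simultaneously, so a single greedy choice need not reduce cleanly, and one may be forced to split off several degrees at once or to reorganize the weights globally. Turning the Iano--Fletcher divisibility relations into a guaranteed, induction-compatible subset-sum decomposition, uniformly in $c$, is the step I expect to require genuinely new input, and it is the reason the statement is stated here as a conjecture rather than a theorem.
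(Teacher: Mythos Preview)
The statement you are attempting to prove is explicitly a \emph{conjecture} in the paper; the authors do not prove it, so there is no ``paper's own proof'' to compare against. What the paper does offer, at the end of \S\ref{section:proof}, is a sketch of a possible attack: replace the WP-graph by a simplicial complex recording greatest common divisors of arbitrary subsets of weights, and try to generalize Corollary~\ref{corollary:codim-2-graphs}. This is structurally quite different from your proposal: the paper's codimension-$2$ argument is \emph{not} a reduction to codimension~$1$, but a direct combinatorial partition of the non-unit weights into two blocks via the graph, so there is no evidence in the paper that an inductive descent on $c$ is the right organizing principle.

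Your inductive step has a concrete obstruction that you gesture at but perhaps underestimate. The residual data $\big((a_j)_{j\notin S_c};d_1,\ldots,d_{c-1}\big)$ need not satisfy the smoothness constraints at all, and a single bad choice of $S_c$ really can destroy them. For instance, take family No.~1 in Table~\ref{table:Fano-dim-4}: $X_{6,6}\subset\PP(1^3,2^2,3^2)$. Removing $S_2=\{5,6\}$ (the two weight-$3$ indices) leaves a would-be sextic in $\PP(1^3,2^2)$, but the two remaining weights equal to~$2$ now violate the codimension-$1$ analogue of \cite[Lemma~2.15]{PrzyalkowskiShramov-Weighted} (two weights share the factor~$2$, only one degree is available), so $X'$ is not smooth. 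A different choice, $S_2=\{2,4,6\}$, does leave a smooth sextic in $\PP(1,1,2,3)$, so the step can succeed here---but the point is that the existence of \emph{some} good $S_c$ is exactly the content of the conjecture, and your scheme gives no mechanism to produce it beyond $c=2$. The paper's own cautionary example in \S\ref{section:proof}, the non-smooth $X_{2,3,5,30}\subset\PP(1^k,6,10,15)$, shows that the divisibility conditions of \cite[Lemma~2.15]{PrzyalkowskiShramov-Weighted} alone are insufficient to guarantee a nef partition; one genuinely needs the full strength of smoothness, and no one currently knows how to package that into a splitting lemma of the kind you need.

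You are right to flag this as the essential difficulty, and your closing paragraph is accurate: what you have written is a reasonable heuristic outline, not a proof. The paper's authors are in the same position, which is why the statement is labelled a conjecture.
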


The plan of the paper is as follows.
In~\S\ref{section:NEF} we give definitions of nef partitions and Landau--Ginzburg models they correspond to.
In~\S\ref{section:WPgraphs} we introduce the combinatorial method to
deal with nef partitions based on certain graphs with vertices
labelled by non-trivial weights of the weighted projective space.
In~\S\ref{section:proof} we prove Theorem~\ref{theorem:main} and
make some remarks on its possible generalizations.
In~\S\ref{section:4-folds} we write down nice nef partitions and weak Landau--Ginzburg models
for four- and five-dimensional smooth well formed Fano weighted complete intersections
that are not intersections with linear cones
to give additional evidence for Conjecture~\ref{conjecture:main},
and make a couple of concluding remarks.

\smallskip
We are grateful to the referee for his helpful comments, in particular for his proof of Lemma~\ref{lemma:1-2-3-vertices}(ii)
included in the final version of this paper.

\section{Nef partitions and Landau--Ginzburg models}
\label{section:NEF}

Let $X$ be a smooth well formed Fano complete intersection of hypersurfaces of degrees~\mbox{$d_1,\ldots,d_c$} in $\PP(a_0,\ldots,a_n)$.
Assume that $X$ admits a nef partition
$$
\{0,\ldots,n\}=S_0\sqcup S_1\sqcup \ldots \sqcup S_c.
$$

\begin{definition}
\label{definition:Givental LG}
Givental's Landau--Ginzburg model is a quasi-projective variety in~$(\CC^*)^{n+1}$ with coordinates $x_0,\ldots,x_n$ given by equations
$$\left\{
  \begin{array}{l}
     x_0^{a_0}\cdot\ldots\cdot x_n^{a_n}=1, \\
     \sum_{j\in S_i} x_j=1,\ \ \ i=1,\ldots, c,
  \end{array}
\right.
$$
together with a function $\sum_{j\in S_0} x_j$ called \emph{superpotential}.
\end{definition}

If the nef partition is nice, then one can birationally represent Givental's Landau--Ginzburg model by a complex torus with a function on it.
This function is represented by the following Laurent polynomial.
Let $s_{i,1},\ldots,s_{i,r_i}$, where $r_i=|S_i|$, be elements of $S_i$ and let
$x_{i,1},\ldots,x_{i,r_i}$ be formal variables of weights $a_{s_{i,1}},\ldots,a_{s_{i,r_i}}$.
Since the nef partition is nice, we can assume that $a_{s_{0,r_0}}=1$.
Then Givental's Landau--Ginzburg model for $X$ is birational to $(\CC^*)^{n-c}$ with coordinates $x_{i,j}$
with superpotential
\begin{equation}\label{eq:LG-model}
f_X=\frac{\prod_{i=1}^c (x_{i,1}+\ldots+x_{i,r_i-1}+1)^{d_i}}{\prod\limits_{\substack{i=0,\ldots,c,\\j=1,\ldots,r_i-1}}{x_{i,j}^{a_{i,j}}}}+x_{0,1}+\ldots+x_{0,r_0-1},
\end{equation}
see~\cite{Prz13} and \cite[\S 3]{PSh17}. Indices of variables in the factors in the numerator are~\mbox{$(i,1),\ldots, (i,s_i-1)$}.
However one can choose any $s_i-1$ indices among $(i,1),\ldots, (i,s_i)$ to distinguish such
variables. The resulting family is relatively birational to the one presented above.

\begin{remark}
We see that the main difficulty to represent Givental's Landau--Ginzburg model for a weighted complete  intersection by a Laurent polynomial is to find a nice nef partition; once it is found it is easy to get a birational isomorphism
between Givental's Landau--Ginzburg model and a complex torus, so any nice nef partition gives a Laurent polynomial
in this way. Givental's construction of Landau--Ginzburg models can be applied, besides complete intersections in smooth
toric varieties or weighted projective spaces, to other related cases such as complete intersections in Grassmannians
or partial flag varieties, see~\cite{BCFKS98}. Unlike the case of weighted projective spaces, it is easy
to describe nef partitions in the latter cases, and this can be done in a lot of ways. However the main problem for representing
Landau--Ginzburg models by Laurent polynomials in this case is to find a ``good'' nef partition among all of them,
and to construct the birational isomorphism with a complex torus, see~\cite{PSh17},~\cite{PSh14},~\cite{CKP14},~\cite{PSh15b},~\cite{DH15},~\cite{Pr17}.
\end{remark}

Givental in~\cite{Gi97} computed \emph{$I$-series} of complete intersections in smooth toric varieties,
that is a generating series of genus zero one-pointed Gromov--Witten invariants with descendants.
He proved that this series gives a solution of Picard--Fuchs equation for the family of fibers of the superpotential.
The $I$-series is described in terms of boundary divisors of the toric variety and the hypersurfaces that define
the complete intersection.
In~\cite{Prz07} it was shown that Givental's recipe for $I$-series for complete intersections in singular toric varieties
works in the same way provided that the complete intersection does not intersect the singular locus of the toric variety.
The reason is that curves lying on the complete intersection (that is ones that we count) do not intersect the singular locus,
so we can resolve singularities of the toric variety and apply Givental's recipe; the exceptional divisors do not contribute to
the $I$-series. Thus one can explicitly write down an $I$-series for $X$. One can easily compute the main period
for $f_X$, see, for instance,~\cite{Prz08}, and check that it coincides with the $I$-series for $X$. Moreover,
if the Newton polytope of $f_X$ is reflexive (which
holds for complete intersections in usual projective spaces, see~\cite{Prz16} and~\cite{PSh15a}, but in fact it is not common for weighted complete intersections in weighted projective spaces with non-trivial
weights), then
$f_X$ admits a {Calabi--Yau compactification} (see~\cite[Remark~9]{Prz17}).
The Laurent polynomial $f_X$ also corresponds to a certain toric
degeneration of $X$, see~\cite{ILP13}. In other words, in this case $f_X$ is a \emph{toric Landau--Ginzburg model} of $X$,
see more details, say, in~\cite{Prz13}.

\section{Weighted projective graphs}
\label{section:WPgraphs}

In this section we establish auxiliary combinatorial results that
will be used in the proof of Theorem~\ref{theorem:main}.
Given a graph $\Gamma$, we will denote by $V(\Gamma)$ the set of its vertices.

\begin{definition}\label{definition:WP-graph}
A \emph{weighted projective graph}, or a \emph{WP-graph},
is a non-empty non-oriented graph $\Gamma$ without loops and multiple edges
together with a \emph{weight function}
$$
\alpha_{\Gamma}\colon V(\Gamma)\to \Z_{\ge 2}
$$
such that the following
properties hold
\begin{itemize}
\item for any two vertices $v_1, v_2\in V(\Gamma)$
there exists an edge connecting $v_1$ and $v_2$ in $\Gamma$ if and only if
the numbers
$\alpha_{\Gamma}(v_1)$ and $\alpha_{\Gamma}(v_2)$ are not coprime;

\item for any three vertices $v_1, v_2, v_3\in V(\Gamma)$
the numbers
$\alpha_{\Gamma}(v_1)$, $\alpha_{\Gamma}(v_2)$,
and $\alpha_{\Gamma}(v_3)$ are coprime.
\end{itemize}
\end{definition}

The motivation for Definition~\ref{definition:WP-graph} is as follows. If $\P=\P(a_0,\ldots,a_n)$
is a well formed weighted projective space such that every three
numbers $a_{i_1}$, $a_{i_2}$, and $a_{i_3}$ are coprime, we can produce a
WP-graph whose vertices are labelled by the indices $i$ such that
$a_i>1$, and whose weight function assigns the weight $a_i$ to the corresponding vertex.
We will use this graph to describe singularities of~$\P$ and
complete intersections therein, see~\S\ref{section:proof}.

\begin{definition}
If $\Gamma$ is a WP-graph, we define $\Sigma\Gamma$ to be the sum
of $\alpha_{\Gamma}(v)$ over all vertices~$v$ of~$\Gamma$,
and $\lcm\Gamma$ to be the least common multiple
of $\alpha_{\Gamma}(v)$ over all vertices~$v$ of~$\Gamma$.
\end{definition}

Our current goal is to show that under certain assumptions on a WP-graph
$\Gamma$ one has~\mbox{$\lcm\Gamma\ge \Sigma\Gamma$}. However, this is not always
the case for an arbitrary WP-graph.

\begin{example}
\label{example:exception}
Let $\Delta=\Delta(6,10,15)$ be a graph with three vertices $v_1$, $v_2$, and $v_3$,
and three edges connecting the pairs of the vertices.
Put
$$
\alpha_{\Delta}(v_1)=6,\quad \alpha_{\Delta}(v_2)=10,\quad
\alpha_{\Delta}(v_3)=15,
$$
see Figure~\ref{figure1}.
Then $\Delta$ is a WP-graph with $\Sigma\Delta=31$ and
$\lcm\Delta=30$.
\end{example}

\begin{figure}[htbp]
\begin{center}
\includegraphics[width=3cm]{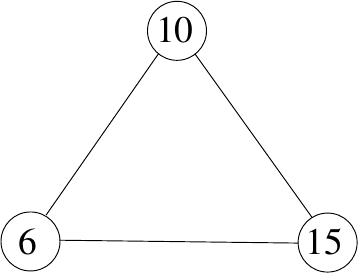}
\end{center}
\caption{WP-graph $\Delta(6,10,15)$}
\label{figure1}
\end{figure}

\begin{remark}
\label{remark:exception}
Suppose that a WP-graph $\Gamma$ contains a WP-subgraph $\Delta(6,10,15)$.
Then it is easy to see that $\Delta(6,10,15)$ is a connected component
of~$\Gamma$, and such subgraph is unique.
\end{remark}

\begin{definition}
Let $\Gamma$ be a WP-graph, and $v$ be its vertex.
We say that $v$ is \emph{weak} if there is an edge
connecting $v$ with another vertex $v'$ of $\Gamma$
such that $\alpha_{\Gamma}(v)$ divides $\alpha_{\Gamma}(v')$.
If $v$ is not weak, we say that it is \emph{strong}.
\end{definition}

\begin{example}
The graph on Figure~\ref{figure2} contains three weak vertices: one labelled
by weight~$7$ and two labelled by weight~$17$.
\end{example}

\begin{figure}[htbp]
\begin{center}
\includegraphics[width=4.5cm]{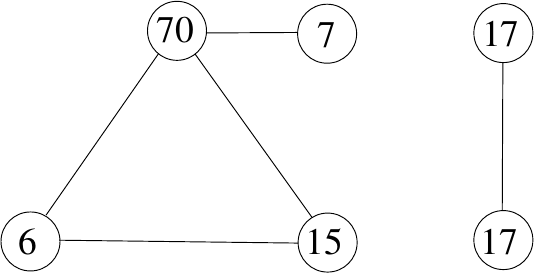}
\end{center}
\caption{Weak and strong vertices}
\label{figure2}
\end{figure}

It easily follows from the definitions that if $v$ is a weak vertex
of a WP-graph $\Gamma$, then there is only one edge in $\Gamma$
containing~$v$. We will see later that (surprisingly) the only WP-graph
$\Gamma$ without weak vertices such that $\lcm\Gamma<\Sigma\Gamma$ is~\mbox{$\Delta(6,10,15)$}.

To proceed we will need the following elementary computation.

\begin{lemma}\label{lemma:elementary}
Let $N$ and $M$ be positive integers such that
$N\ge 4$ and $M\ge\lceil\frac{N-1}{2}\rceil$. Let~\mbox{$a_1,\ldots,a_M$}
be integers such that all $a_i$ are greater than~$1$, and $a_i$ are
be pairwise coprime.
Then~\mbox{$\prod a_i\ge N$}.
\end{lemma}
\begin{proof}
We can assume that $a_M\ge 2$ and
$a_1\ge\ldots\ge a_{M-1}\ge 3$. This implies
that
$$
\prod a_i\ge 2\cdot 3^{M-1}\ge
2\cdot 3^{\lceil\frac{N-3}{2}\rceil}.
$$
The latter value is not smaller than $N$ for
$N\ge 4$, which is easily checked by induction on~$N$.
\end{proof}

\begin{lemma}
\label{lemma:1-2-3-vertices}
Let $\Gamma$ be a connected WP-graph without weak vertices.
The following assertions hold.
\begin{itemize}
\item[(i)] If $\Gamma$ has at most two vertices, then
$\lcm\Gamma\ge \Sigma\Gamma$.

\item[(ii)] If $\Gamma$ has three vertices, then
$\lcm\Gamma\ge \Sigma\Gamma-1$, and
$\lcm\Gamma\ge \Sigma\Gamma$ unless $\Sigma$ is the WP-graph
$\Delta(6,10,15)$.
\end{itemize}
\end{lemma}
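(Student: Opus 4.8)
The plan is to split on the number of vertices, using throughout that $\Gamma$ is connected and that the weight function takes coprime values on any three vertices.

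For assertion (i): if $\Gamma$ has a single vertex $v$, then $\lcm\Gamma=\alpha_\Gamma(v)=\Sigma\Gamma$ and there is nothing to prove. If $\Gamma$ has two vertices $v_1,v_2$, connectedness forces an edge, so $d=\gcd(\alpha_\Gamma(v_1),\alpha_\Gamma(v_2))\ge 2$; writing $\alpha_\Gamma(v_i)=d m_i$ with $\gcd(m_1,m_2)=1$, the absence of weak vertices (neither weight divides the other) forces $m_1,m_2\ge 2$. Then $\lcm\Gamma=dm_1m_2$ and $\Sigma\Gamma=d(m_1+m_2)$, and $m_1m_2\ge m_1+m_2$ since $(m_1-1)(m_2-1)\ge 1$. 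First I would record these short computations.

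For assertion (ii) I write $a_1,a_2,a_3$ for the weights and recall $\gcd(a_1,a_2,a_3)=1$. A connected graph on three vertices is either a \emph{path} or a \emph{triangle}, and I would treat these separately. In the path case, say $v_1$--$v_2$--$v_3$ with $\gcd(a_1,a_3)=1$, set $d_{12}=\gcd(a_1,a_2)$ and $d_{23}=\gcd(a_2,a_3)$, both $\ge 2$; coprimality of $a_1,a_3$ makes $d_{12},d_{23}$ coprime, so $d_{12}d_{23}\mid a_2$, and the absence of weak vertices gives $a_1/d_{12}\ge 2$ and $a_3/d_{23}\ge 2$. A short computation then shows $\lcm\Gamma=a_1a_2a_3/(d_{12}d_{23})$ and that each of $a_1,a_2,a_3$ is at most $\tfrac14\lcm\Gamma$, whence $\Sigma\Gamma\le \tfrac34\lcm\Gamma<\lcm\Gamma$; so here the inequality is even strict and both assertions hold.

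The triangle case is the crux. All three pairwise gcds $\beta=\gcd(a_2,a_3)$, $\gamma=\gcd(a_1,a_3)$, $\delta=\gcd(a_1,a_2)$ are $\ge 2$, and a prime dividing two of them would divide all three weights, contradicting $\gcd(a_1,a_2,a_3)=1$; hence $\beta,\gamma,\delta$ are \emph{pairwise coprime} (this also shows a triangle automatically has no weak vertices, so the hypothesis is free here). Writing $a_1=\gamma\delta u$, $a_2=\beta\delta w$, $a_3=\beta\gamma z$ with $u,w,z\ge 1$, a prime-by-prime valuation check yields the clean identity $\lcm\Gamma=\beta\gamma\delta\,uwz$, so that
$$
\frac{\Sigma\Gamma}{\lcm\Gamma}=\frac{1}{\beta wz}+\frac{1}{\gamma uz}+\frac{1}{\delta uw}.
$$
Since $u,w,z\ge 1$ this is at most $\tfrac1\beta+\tfrac1\gamma+\tfrac1\delta$, and a finite check shows that among pairwise coprime integers $\ge 2$ this sum exceeds $1$ only for $\{\beta,\gamma,\delta\}=\{2,3,5\}$, where it equals $\tfrac{31}{30}$. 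In that exceptional triple the displayed formula forces $\Sigma\Gamma/\lcm\Gamma<1$ as soon as one of $u,w,z$ exceeds $1$; thus $\Sigma\Gamma>\lcm\Gamma$ occurs precisely when $u=w=z=1$ and $\{\beta,\gamma,\delta\}=\{2,3,5\}$, i.e. when $\{a_1,a_2,a_3\}=\{6,10,15\}$ and $\Gamma=\Delta(6,10,15)$, in which case $\Sigma\Gamma=\lcm\Gamma+1$. This gives $\lcm\Gamma\ge\Sigma\Gamma-1$ always and $\lcm\Gamma\ge\Sigma\Gamma$ otherwise.

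I expect the triangle case to be the main obstacle: both the valuation bookkeeping behind the identity $\lcm\Gamma=\beta\gamma\delta\,uwz$ and the finite check isolating $\{2,3,5\}$ (including ruling out that triple when $u,w,z$ are not all equal to $1$) require care, whereas the path case and the one- and two-vertex cases are routine.
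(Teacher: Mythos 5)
Your proof is correct and follows essentially the same route as the paper: the same case split by number of vertices and by path versus triangle, and the same parametrization of the triangle by the three pairwise gcds and their cofactors, with the finite check isolating $\{2,3,5\}$ (the paper phrases it as a direct case analysis on $a_{12}\le a_{13}\le a_{23}$ rather than via the unit-fraction sum $\tfrac1\beta+\tfrac1\gamma+\tfrac1\delta\le 1$, but the content is identical). The only cosmetic differences are your $\le\tfrac14\lcm\Gamma$ bound in the path case, where the paper instead uses $b_1b_3\ge b_1+b_3+1$ for coprime $b_1,b_3>1$, and your observation that a triangle automatically has no weak vertices.
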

\begin{proof}
If $\Gamma$ has only one vertex, then one clearly has
$\lcm\Gamma=\Sigma\Gamma$.

Suppose that $\Gamma$ has two or three vertices, and denote them by $v_i$, $1\le i\le t$, where~$t$ equals either $2$ or $3$.
Put $r_i=\lcm\Gamma/\alpha_{\Gamma}(v_i)$. Then $r_i$ are pairwise coprime positive integers; moreover,
one has $r_i\ge 2$, because $\Gamma$ has no weak vertices.
If $t=2$, then
$$
\Sigma\Gamma=\lcm\Gamma\cdot\left(\frac{1}{r_1}+\frac{1}{r_2}\right)<\lcm\Gamma.
$$
If $t=3$, write
$$
\Sigma\Gamma=\lcm\Gamma\cdot\left(\frac{1}{r_1}+\frac{1}{r_2}+\frac{1}{r_3}\right).
$$
Therefore, one has $\Sigma\Gamma>\lcm\Gamma$ if and only if
$$
\frac{1}{r_1}+\frac{1}{r_2}+\frac{1}{r_3}>1.
$$
This easily implies that $r_1=2$, $r_2=3$, and $r_3=5$ up to permutation,
which in turn means that $\alpha_{\Gamma}(v_1)=15$,
$\alpha_{\Gamma}(v_2)=10$, and $\alpha_{\Gamma}(v_3)=6$.
Therefore, $\Gamma$ is the WP-graph~\mbox{$\Delta(6,10,15)$}, and one has
$$
\lcm\Gamma=30=\Sigma\Gamma-1.
$$
\end{proof}

\begin{lemma}\label{lemma:bez-visyachih-vershin}
Let $\Gamma$ be a connected WP-graph.
Suppose that every vertex of $\Gamma$ is contained in at least two
edges of $\Gamma$. Suppose also that the number $N$ of vertices of
$\Gamma$ is at least~$4$. Then $\lcm\Gamma\ge \Sigma\Gamma$.
\end{lemma}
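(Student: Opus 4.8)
The plan is to exploit the defining axiom that no prime divides three of the weights. For an edge $e=\{v,w\}$ of $\Gamma$ write $a_e=\gcd(\alpha_\Gamma(v),\alpha_\Gamma(w))$, so that $a_e\ge 2$ because $v$ and $w$ are joined. First I would record three consequences of the three-coprimality axiom: every prime divides at most two of the weights; hence the integers $a_e$, taken over all edges, are pairwise coprime; and for an edge $e$ and a vertex $u$ not lying on $e$ the number $a_e$ is coprime to $\alpha_\Gamma(u)$, since otherwise some prime would divide three distinct weights.

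Next I would reduce the statement to a single divisibility together with a counting bound. Choose a vertex $v^*$ whose weight $\alpha_\Gamma(v^*)$ is maximal among all vertices. Since $\alpha_\Gamma(v^*)$ together with all the $a_e$ for edges $e\not\ni v^*$ are pairwise coprime, and each of them divides $\lcm\Gamma$ (the weight trivially, and each such $a_e$ because it divides the weight of an endpoint of $e$), their product divides $\lcm\Gamma$; thus
\[
\lcm\Gamma\ge \alpha_\Gamma(v^*)\cdot\prod_{e\not\ni v^*}a_e.
\]
On the other hand $\Sigma\Gamma=\sum_{v\in V(\Gamma)}\alpha_\Gamma(v)\le N\cdot\alpha_\Gamma(v^*)$ by maximality. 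Hence it is enough to prove that $\prod_{e\not\ni v^*}a_e\ge N$.

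The remaining point is that enough edges avoid $v^*$. Let $E$ be the number of edges and $d=\deg v^*$. Counting the valency sum, and using that the other $N-1$ vertices each have valency at least $2$, gives $2E=\sum_{u\in V(\Gamma)}\deg u\ge d+2(N-1)$, so $E-d\ge N-1-\tfrac{d}{2}$; since $d\le N-1$ this forces $E-d\ge\tfrac{N-1}{2}$, and as $E-d$ is an integer, $E-d\ge\lceil\tfrac{N-1}{2}\rceil$. The edges not meeting $v^*$ are $E-d$ in number, and their gcds form that many pairwise coprime integers greater than $1$, so Lemma~\ref{lemma:elementary}(ii), applied with $M=E-d$, yields $\prod_{e\not\ni v^*}a_e\ge N$. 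Combining the two displays finishes the argument. I expect the genuinely delicate step to be this valency count feeding Lemma~\ref{lemma:elementary}(ii): it is precisely the hypotheses $N\ge 4$ and minimal valency $2$ that make it go through, and the arithmetic input of Lemma~\ref{lemma:elementary}(ii) first becomes available at $N\ge 4$, which is exactly why the three-vertex exception $\Delta(6,10,15)$ does not interfere here.
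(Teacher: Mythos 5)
Your proposal is correct and follows essentially the same route as the paper: both pick a maximal-weight vertex $v_{max}$, observe that the pairwise coprime gcds $a_e$ of the edges avoiding $v_{max}$ number at least $\lceil\frac{N-1}{2}\rceil$ (you via the valency sum, the paper by the same count stated tersely), and then apply Lemma~\ref{lemma:elementary}(ii) to get $\lcm\Gamma\ge N\alpha_\Gamma(v_{max})\ge\Sigma\Gamma$. Your explicit justification that $\alpha_\Gamma(v_{max})$ is coprime to each such $a_e$ is a detail the paper leaves implicit, and it is a welcome addition.
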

\begin{proof}
Let $v_{max}$ be the vertex of $\Gamma$ where $\alpha_{\Gamma}$ attains
its maximum. Let $E$ be the set of all edges of $\Gamma$ that do not
contain the vertex~$v_{max}$. It is easy to see
that
$$
|E|\ge \left\lceil\frac{N-1}{2}\right\rceil.
$$

For every edge $e$ connecting the vertices $v_1$ and $v_2$ of $\Gamma$,
let $a_e$ denote the greatest common divisor of
$\alpha_{\Gamma}(v_1)$ and $\alpha_{\Gamma}(v_2)$.
Note that all $a_e$ are pairwise coprime integers,
and all of them are greater than~$1$.
By Lemma~\ref{lemma:elementary}(ii) we have
$$
\lcm\Gamma\ge \alpha_{\Gamma}(v_{max})\cdot\prod\limits_{e\in E} a_e\ge
N\alpha_{\Gamma}(v_{max})\ge \sum\limits_{v\in V(\Gamma)}\alpha_{\Gamma}(v)=
\Sigma\Gamma.
$$
\end{proof}

\begin{lemma}\label{lemma:visyachaya-vershina}
Let $\Gamma$ be a connected WP-graph without weak vertices.
Suppose that there is a vertex $v$ of $\Gamma$ contained in only one
edge of $\Gamma$. Let $\Gamma'$ be the WP-graph that is obtained from
$\Gamma$ by throwing away the vertex $v$ and the edge containing $v$,
and restricting the weight function to the remaining vertices.
Suppose that
$$
\lcm\Gamma'\ge \Sigma\Gamma'-1.
$$
Then
$$
\lcm\Gamma\ge \Sigma\Gamma.
$$
\end{lemma}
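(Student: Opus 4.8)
The plan is to analyze exactly how the pendant vertex $v$ affects both $\lcm$ and $\Sigma$, and then feed the hypothesis on $\Gamma'$ into a single short inequality. Write $v'$ for the unique vertex joined to $v$ by an edge, and set $a=\alpha_{\Gamma}(v)$, $a'=\alpha_{\Gamma}(v')$, and $d=\gcd(a,a')$; since $v$ and $v'$ are joined, $a$ and $a'$ are not coprime, so $d\ge 2$. Put $b=a/d$ and $b'=a'/d$. Because $v$ is strong, $a$ does not divide $a'$, so $b>1$; because $v'$ is strong, $a'$ does not divide $a$, so $b'>1$; and since $\gcd(a,a')=d$ we get $\gcd(b,b')=1$. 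In particular $b,b'\ge 2$ are coprime, so $(b-1)(b'-1)\ge 2$ (with equality only for $\{b,b'\}=\{2,3\}$); this is the elementary fact the final estimate will rest on.

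First I would pin down the arithmetic of the least common multiple. The key structural observation is that, since $v$ lies in a single edge, $a$ is coprime to $\alpha_{\Gamma}(w)$ for every vertex $w\ne v'$; in particular the only weight of $\Gamma'$ sharing a prime factor with $a$ is $a'$. A prime-by-prime comparison then shows $\gcd(a,\lcm\Gamma')=\gcd(a,a')=d$, whence
\[
\lcm\Gamma=\lcm(\lcm\Gamma',a)=\frac{a\cdot\lcm\Gamma'}{d}=b\cdot\lcm\Gamma'.
\]
On the other side $\Sigma\Gamma=\Sigma\Gamma'+a=\Sigma\Gamma'+bd$, so the desired conclusion $\lcm\Gamma\ge\Sigma\Gamma$ becomes the inequality $b\cdot\lcm\Gamma'\ge\Sigma\Gamma'+bd$.

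To finish I would use two lower bounds for $\lcm\Gamma'$ at once: the hypothesis $\lcm\Gamma'\ge\Sigma\Gamma'-1$, and the trivial $\lcm\Gamma'\ge a'=b'd$ (valid because $a'$ is one of the weights of the nonempty graph $\Gamma'$). Splitting $b\cdot\lcm\Gamma'=(b-1)\lcm\Gamma'+\lcm\Gamma'$ and applying $\lcm\Gamma'\ge b'd$ to the first summand and $\lcm\Gamma'\ge\Sigma\Gamma'-1$ to the last copy, it suffices to verify
\[
(b-1)b'd+(\Sigma\Gamma'-1)\ge\Sigma\Gamma'+bd,
\]
i.e. $d\bigl((b-1)(b'-1)-1\bigr)\ge 1$. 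This is immediate from $(b-1)(b'-1)\ge 2$ and $d\ge 2$.

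I expect the only genuinely delicate point to be the bookkeeping in this last step. Applying the weaker bound $\lcm\Gamma'\ge\Sigma\Gamma'-1$ to the \emph{whole} factor $b$ would multiply the slack $1$ by $b$ and can fail (already for $b=3$, $b'=2$, $d=2$, where $\lcm\Gamma'=\Sigma\Gamma'$), so it is essential to spend the coprimality bound $\lcm\Gamma'\ge a'$ on the $(b-1)$ copies and reserve the ``$-1$'' hypothesis for a single copy of $\lcm\Gamma'$. The other step deserving care is the prime-by-prime verification that $\gcd(a,\lcm\Gamma')=d$, which is precisely where the hypotheses that $v$ meets only one edge and that $\Gamma$ has no weak vertices enter the argument.
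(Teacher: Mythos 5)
Your proof is correct. The setup coincides with the paper's: both reduce the claim to the identity $\lcm\Gamma=b\cdot\lcm\Gamma'$ (with $b=\alpha_\Gamma(v)/\gcd(\alpha_\Gamma(v),\alpha_\Gamma(v'))$) together with $\Sigma\Gamma=\Sigma\Gamma'+\alpha_\Gamma(v)$, and both then split $b\cdot\lcm\Gamma'$ as $(b-1)\lcm\Gamma'+\lcm\Gamma'$, spending the bound $\lcm\Gamma'\ge\alpha_\Gamma(v')$ on the first part and the hypothesis $\lcm\Gamma'\ge\Sigma\Gamma'-1$ on the second. The difference is in the endgame: the paper runs a case distinction on whether $\Sigma\Gamma'\ge ab+2$ or $\Sigma\Gamma'\le ab+1$ (in its notation $\alpha_\Gamma(v)=ab$, $\alpha_\Gamma(v')=ac$), using the cheap estimate $b\ge 2$ in the first case and deriving $b>c$, hence $(b-1)c\ge b+1$, in the second; you avoid the case split entirely by observing that $b$ and $b'$ are coprime integers $\ge 2$, so $(b-1)(b'-1)\ge 2$, which makes the single inequality $d\bigl((b-1)(b'-1)-1\bigr)\ge 1$ do all the work. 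Your version is uniform and slightly cleaner; it also gives a more careful justification of $\gcd\bigl(\alpha_\Gamma(v),\lcm\Gamma'\bigr)=\gcd\bigl(\alpha_\Gamma(v),\alpha_\Gamma(v')\bigr)$ via the prime-by-prime argument, whereas the paper asserts a pairwise-coprime factorization $\alpha_\Gamma(v)=ab$, $\alpha_\Gamma(v')=ac$ that need not literally exist for arbitrary weights (e.g.\ $p^3q$ and $pq^3$), even though the identities it actually uses remain valid. You correctly identified the one delicate point: applying the hypothesis to all $b$ copies of $\lcm\Gamma'$ would lose a factor of $b$ on the slack and fail.
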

\begin{proof}
Let $v'$ be the vertex of $\Gamma$ connected with the vertex $v$.
Write $\alpha_{\Gamma}(v)=ab$ and~\mbox{$\alpha_{\Gamma}(v')=ac$},
where $b$ and $c$ are coprime positive integers,
and $a\ge 2$, see Figure~\ref{figure3}.
\begin{figure}[htbp]
\begin{center}
\includegraphics[width=4.8cm]{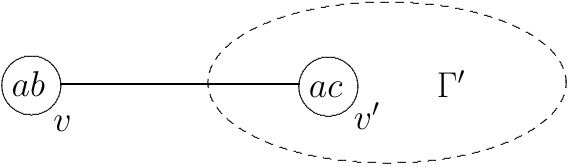}
\end{center}
\caption{Strong vertex contained in a unique edge}
\label{figure3}
\end{figure}

Note that $b\ge 2$ and $c\ge 2$, because $v$ and $v'$ are strong vertices.
One has
$$
\lcm\Gamma=b\lcm\Gamma', \quad
\Sigma\Gamma=ab+\Sigma\Gamma'.
$$
Note also that the graph $\Gamma'$ is connected because the graph $\Gamma$ is connected.

Suppose that $\Sigma\Gamma'\ge ab+2$. Then
$$
\lcm\Gamma=b\lcm\Gamma'\ge 2\lcm\Gamma'\ge
2\Sigma\Gamma'-2\ge \Sigma\Gamma'+ab=\Sigma\Gamma.
$$

Now suppose that $\Sigma\Gamma'\le ab+1$.
This is impossible if $c>b$, because $a\ge 2$.
Thus we have $b\ge c$, which means $b>c$ since $b$ and $c$ are coprime.
Hence
$$
(b-1)c\ge 2b-2\ge b+1.
$$
Note that $\lcm\Gamma'\ge ac$.
This gives
\begin{multline*}
\lcm\Gamma=b\lcm\Gamma'=\lcm\Gamma'+(b-1)\lcm\Gamma'\ge
\Sigma\Gamma'-1+(b-1)ac\ge\\ \ge
\Sigma\Gamma'-1+(b+1)a>
\Sigma\Gamma'+ab=\Sigma\Gamma.
\end{multline*}
\end{proof}

\begin{proposition}
\label{proposition:lcm-connected}
Let $\Gamma$ be a connected WP-graph without weak vertices.
Then
$$
\lcm\Gamma\ge \Sigma\Gamma-1,
$$
and moreover
$\lcm\Gamma\ge \Sigma\Gamma$ unless $\Gamma$
is the WP-graph $\Delta(6,10,15)$.
\end{proposition}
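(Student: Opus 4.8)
The plan is to argue by induction on the number $N$ of vertices of $\Gamma$, using the preceding lemmas to dispatch the base cases and the two possible shapes of the inductive step.

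For the base cases $N\le 3$ I would simply invoke Lemma~\ref{lemma:1-2-3-vertices}: part~(i) handles $N\in\{1,2\}$ and yields $\lcm\Gamma\ge\Sigma\Gamma$ with no exceptional graphs, while part~(ii) handles $N=3$ and gives exactly the claimed dichotomy, with $\Delta(6,10,15)$ as the unique exceptional case.

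For the inductive step, suppose $N\ge 4$ and that the statement holds for all connected WP-graphs without weak vertices on fewer vertices. I would split into two cases according to whether $\Gamma$ has a vertex lying in only one edge. If every vertex of $\Gamma$ is contained in at least two edges, then Lemma~\ref{lemma:bez-visyachih-vershin} applies directly and gives $\lcm\Gamma\ge\Sigma\Gamma$. Otherwise there is a vertex $v$ contained in a single edge; let $\Gamma'$ be the graph obtained by deleting $v$ together with its edge. Before applying the induction hypothesis I would check that $\Gamma'$ is again a connected WP-graph without weak vertices: it is a WP-graph because the two coprimality conditions of Definition~\ref{definition:WP-graph} are inherited by every induced subgraph; it is connected, as recorded in Lemma~\ref{lemma:visyachaya-vershina}; and it has no weak vertices because weakness is witnessed by the presence of an edge, so deleting an edge can never turn a strong vertex into a weak one. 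Since $\Gamma'$ has $N-1\ge 3$ vertices, the induction hypothesis gives $\lcm\Gamma'\ge\Sigma\Gamma'-1$, and Lemma~\ref{lemma:visyachaya-vershina} then upgrades this to $\lcm\Gamma\ge\Sigma\Gamma$.

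In both branches of the inductive step the conclusion is the exception-free inequality $\lcm\Gamma\ge\Sigma\Gamma$, so no exceptional graph can arise once $N\ge 4$; the sole exception in the whole statement is $\Delta(6,10,15)$, appearing at $N=3$. The main point is therefore not any single hard estimate---all the arithmetic is already packaged in Lemma~\ref{lemma:elementary} and in Lemmas~\ref{lemma:1-2-3-vertices}--\ref{lemma:visyachaya-vershina}---but rather the bookkeeping of the induction. The one genuinely delicate verification I expect to dwell on is the stability of the hypotheses under deletion of a pendant vertex, in particular confirming that removing an edge cannot create a weak vertex, so that the induction hypothesis legitimately applies to $\Gamma'$.
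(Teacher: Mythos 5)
Your proposal is correct and follows essentially the same route as the paper: induction on the number of vertices, with Lemma~\ref{lemma:1-2-3-vertices} for the base cases, Lemma~\ref{lemma:visyachaya-vershina} for the pendant-vertex case, and Lemma~\ref{lemma:bez-visyachih-vershin} otherwise. Your explicit check that $\Gamma'$ remains a connected WP-graph without weak vertices (so that the induction hypothesis legitimately applies) is a detail the paper leaves implicit, and it is verified correctly.
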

\begin{proof}
We prove the assertion
by induction on the
number $N$ of vertices of $\Gamma$.
We know from Lemma~\ref{lemma:1-2-3-vertices} that
the assertion holds for $N\le 3$.
If $\Gamma$ has a vertex contained in only one
edge of $\Gamma$, then the assertion
follows by induction from Lemma~\ref{lemma:visyachaya-vershina}.
Therefore, we may assume that
$N\ge 4$, and every vertex of $\Gamma$ is contained in at least two
edges of $\Gamma$. Now the assertion follows from
Lemma~\ref{lemma:bez-visyachih-vershin}.
\end{proof}

\begin{corollary}
\label{corollary:lcm}
Let $\Gamma$ be a WP-graph without weak vertices.
Suppose that $\Gamma$
is not the WP-graph $\Delta(6,10,15)$.
Then
$\lcm\Gamma\ge \Sigma\Gamma$.
\end{corollary}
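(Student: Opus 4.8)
The plan is to reduce the general case to the connected case already settled in Proposition~\ref{proposition:lcm-connected}. First I would decompose $\Gamma$ into its connected components $\Gamma_1,\ldots,\Gamma_k$. Since there are no edges between distinct components, the defining property of a WP-graph forces the weights of vertices lying in different components to be pairwise coprime: if two such weights shared a common factor, the corresponding vertices would be joined by an edge and hence lie in the same component. From this coprimality I would deduce the two basic identities $\Sigma\Gamma=\sum_i\Sigma\Gamma_i$ (immediate from the definition) and, more importantly, the multiplicativity $\lcm\Gamma=\prod_i\lcm\Gamma_i$, which holds because the sets of primes dividing the various $\lcm\Gamma_i$ are pairwise disjoint.

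Next I would observe that each component $\Gamma_i$ is itself a connected WP-graph without weak vertices, since a weak vertex of a component would a fortiori be a weak vertex of $\Gamma$. Hence Proposition~\ref{proposition:lcm-connected} applies to each of them: one has $\lcm\Gamma_i\ge\Sigma\Gamma_i$ whenever $\Gamma_i\neq\Delta(6,10,15)$, while $\lcm\Gamma_i=\Sigma\Gamma_i-1$ for the exceptional graph. By Remark~\ref{remark:exception} at most one component can equal $\Delta(6,10,15)$, so only a single deficit of size $1$ can ever occur.

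I would then split into two cases. If no component is $\Delta(6,10,15)$, then $\lcm\Gamma=\prod_i\lcm\Gamma_i\ge\prod_i\Sigma\Gamma_i\ge\sum_i\Sigma\Gamma_i=\Sigma\Gamma$, where the last inequality is the elementary fact that a product of integers each at least $2$ dominates their sum (note $\Sigma\Gamma_i\ge 2$ since every vertex has weight $\ge 2$). If exactly one component, say $\Gamma_1$, equals $\Delta(6,10,15)$, then because $\Gamma\neq\Delta(6,10,15)$ there must be at least one further component; writing $P=\prod_{i\ge 2}\Sigma\Gamma_i\ge 2$ and $S=\sum_{i\ge 2}\Sigma\Gamma_i$ with $S\le P$, I would bound $\lcm\Gamma\ge 30P$ and observe that $30P-S\ge 29P\ge 58>31$, so that $\lcm\Gamma\ge 31+S=\Sigma\Gamma$.

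The main obstacle is exactly this exceptional component: on its own $\Delta(6,10,15)$ violates the desired inequality by precisely $1$, so the whole argument hinges on showing that the presence of any other component supplies a multiplicative boost large enough to absorb this deficit. This is what the hypothesis $\Gamma\neq\Delta(6,10,15)$ buys us, through the uniqueness in Remark~\ref{remark:exception} together with the factor of $30$ contributed by the exceptional component. The only other point requiring care is the multiplicativity of the least common multiple across components, but this follows cleanly from the pairwise coprimality of weights in distinct components.
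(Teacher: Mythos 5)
Your proposal is correct and follows essentially the same route as the paper: decompose into connected components, use pairwise coprimality across components to get $\lcm\Gamma=\prod_i\lcm\Gamma_i$ and $\Sigma\Gamma=\sum_i\Sigma\Gamma_i$, apply Proposition~\ref{proposition:lcm-connected} to each component, and absorb the single possible deficit of $1$ from a $\Delta(6,10,15)$ component using the factor $30$ it contributes together with the presence of at least one other component. The only (harmless) difference is in the final numerical estimate, where the paper bounds $30\prod_{i\ge 2}\Sigma\Gamma_i\ge 31+\sum_{i\ge 2}\Sigma\Gamma_i$ directly while you pass through $29P\ge 58>31$.
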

\begin{proof}
Let $\Gamma_1,\ldots,\Gamma_r$ be connected components of
$\Gamma$. Then
\begin{equation}\label{eq:lcm-Sigma-Gamma-i}
\lcm\Gamma=\prod_{i=1}^r\lcm\Gamma_i,\quad \Sigma\Gamma=
\sum\limits_{i=1}^r\Sigma\Gamma_i.
\end{equation}

If a connected component $\Gamma_i$ is not the WP-graph
$\Delta(6,10,15)$, then $\lcm\Gamma_i\ge \Sigma\Gamma_i$ by  Proposition~\ref{proposition:lcm-connected}.
Therefore, if none of $\Gamma_i$ is $\Delta(6,10,15)$, then the assertion immediately follows
from~\eqref{eq:lcm-Sigma-Gamma-i}.

Suppose that some of $\Gamma_i$, say $\Gamma_1$, is
the WP-graph $\Delta(6,10,15)$. Then $r\ge 2$, and none of $\Gamma_2,\ldots,\Gamma_r$
is $\Delta(6,10,15)$. Note that $\Sigma\Gamma_i\ge 2$ (and actually it is at least $7$
for~\mbox{$2\le i\le r$} because of coprimeness condition), so that
$$
30\prod_{i=2}^r\Sigma\Gamma_i\ge 31+\prod_{i=2}^r\Sigma\Gamma_i\ge 31+\sum_{i=2}^r\Sigma\Gamma_i.
$$
Thus~\eqref{eq:lcm-Sigma-Gamma-i} implies
$$
\lcm\Gamma=30\prod_{i=2}^r\lcm\Gamma_i
\ge 30\prod_{i=2}^r\Sigma\Gamma_i\ge
31+\sum_{i=2}^r\Sigma\Gamma_i=\sum_{i=1}^r\Sigma\Gamma_i.
$$
\end{proof}

\begin{definition}\label{definition:WCI-graph}
Let $d_1,\ldots,d_c$ be positive integers.
A \emph{weighted complete intersection graph} (or a \emph{WCI-graph})
\emph{of multidegree $(d_1,\ldots,d_c)$}
is a WP-graph $\Gamma$ such that the following condition holds:
for every $k$ and every choice of
$k$ vertices $v_1,\ldots, v_k$ of $\Gamma$ for which
the greatest common divisor $\delta$ of
$\alpha_{\Gamma}(v_1),\ldots,\alpha_{\Gamma}(v_k)$
is greater than~$1$, there exist $k$ numbers $d_{s_1},\ldots,d_{s_k}$,
$1\le s_1<\ldots<s_k\le c$, whose
greatest common divisor is divisible by $\delta$.
The number $c$ is called the \emph{codimension} of the
WCI-graph~$\Gamma$.
\end{definition}

The motivation for Definition~\ref{definition:WCI-graph} comes from the fact that
a smooth weighted complete intersection
of codimension $1$ or $2$ produces a WCI-graph of codimension $1$ or $2$, respectively,
and some important properties of the weighted complete intersection are controlled
by this WCI-graph, see~\S\ref{section:proof}
for details. Therefore, in this paper we will be mostly interested in WCI-graphs of codimension $1$ and~$2$.

\begin{remark}
It would be more precise to say that a WCI-graph is not just a WP-graph~$\Gamma$ but rather
a collection that consists of~$\Gamma$ and the multidegree $(d_1,\ldots,d_c)$.
In particular, one may have several different WP-graphs with the same~$\Gamma$ and
different multidegrees, and even different codimensions. However, in this paper we are going to deal
only with WCI-graphs of codimension $1$ and $2$, and in any case we want to avoid
this complication of notation and hope that no confusion will arise.
\end{remark}

\begin{lemma}
\label{lemma:two-subgraphs}
Let $\Gamma$ be a WCI-graph of codimension $2$ and bidegree $(d_1,d_2)$.
Then the set of vertices $V(\Gamma)$ is a disjoint union
$$
V(\Gamma)=V_1\sqcup V_2,
$$
such that the complete subgraphs
$\Gamma_1$ and $\Gamma_2$ of $\Gamma$ with vertices $V_1$ and $V_2$
are WP-graphs without weak vertices, none of
$\Gamma_1$ and $\Gamma_2$ contains a connected component $\Delta(6,10,15)$, and
$\lcm\Gamma_i$ divides $d_i$.
\end{lemma}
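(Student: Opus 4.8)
The plan is to construct the splitting by hand: I would first turn the two defining conditions of a WCI-graph into arithmetic constraints on the weights, and then read off a two-colouring of the vertices.

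From the defining condition with $k=1$ applied to a single vertex, every weight $\alpha_\Gamma(v)$ divides $d_1$ or $d_2$; from the condition with $k=2$ applied to the two ends of an edge, the greatest common divisor of the two weights divides $D:=\gcd(d_1,d_2)$, and hence divides both $d_1$ and $d_2$. I would also use the elementary fact that a least common multiple of integers divides $d_i$ if and only if each of the integers does, so that the required condition $\lcm\Gamma_i\mid d_i$ is equivalent to demanding that every weight occurring in $V_i$ divide $d_i$. Since an induced subgraph inherits both WP-graph axioms, the task reduces to finding a partition $V(\Gamma)=V_1\sqcup V_2$ with (a) every weight in $V_i$ dividing $d_i$, (b) no weak vertex in either part, and (c) no connected component $\Delta(6,10,15)$ in either part.

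The key structural step uses three-coprimality, i.e.\ that no prime divides three of the weights. If $\alpha_\Gamma(u)$ divides $\alpha_\Gamma(v)$ for distinct $u,v$, then every prime of $\alpha_\Gamma(u)$ divides only these two weights, so $u$ is adjacent in $\Gamma$ to $v$ alone; thus a vertex is weak precisely when it is the smaller end of such a comparable pair, in which case it is a pendant vertex. Since moreover each weight has at most one strictly larger comparable partner and any two of its smaller comparable partners are coprime, the comparability graph $H$ on $V(\Gamma)$ --- whose edges join pairs of vertices one of whose weights divides the other --- is a disjoint union of stars (an equal-weight pair giving the degenerate two-vertex star). By the edge computation above, every non-central weight of such a star divides $D$ and is therefore free to be placed in either part.

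Now I colour. In each star I assign its central (maximal-weight) vertex to a part $V_i$ such that its weight divides $d_i$ --- which exists since every weight divides $d_1$ or $d_2$ --- and assign every other vertex of the star to the opposite part, which is allowed because those weights divide $D$. Each vertex isolated in $H$ I place into a part $V_i$ with $\alpha_\Gamma(v)\mid d_i$, making free choices arbitrarily, with one exception: the three weights $6,10,15$ of every $\Delta(6,10,15)$-component I split non-trivially between the parts. This is possible because their pairwise greatest common divisors $2,3,5$ all divide $D$, so these weights are free, and by Remark~\ref{remark:exception} the component is isolated, so the splitting interferes with nothing else. It then remains to verify (a)--(c). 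Condition (a) holds by construction. The colouring is a proper two-colouring of $H$, since in each star the centre and the leaves receive opposite colours; hence no comparable pair is monochromatic, and since a weak vertex would lie in the same part as a neighbour that is a multiple of it --- forming a monochromatic comparable pair --- no part has a weak vertex, giving (b). For (c), any component $\Delta(6,10,15)$ of a part would already be pairwise adjacent in $\Gamma$, hence by Remark~\ref{remark:exception} a full component of $\Gamma$, which we have split. I expect the crux to be the structural claim that weak vertices are pendants and that $H$ is a union of stars with all non-central weights dividing $D$; once this is in place the colouring is essentially forced, and the only delicate point is to confirm that separating every comparable pair eliminates all weak vertices simultaneously without recreating a $\Delta(6,10,15)$ elsewhere, which the star structure guarantees.
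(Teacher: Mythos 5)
Your proposal is correct and follows essentially the same route as the paper's proof: reducing $\lcm\Gamma_i\mid d_i$ to a vertexwise divisibility condition, noting that every weight divides some $d_i$ while weights in comparable (divisibility) pairs divide $\gcd(d_1,d_2)$, separating each weak vertex from its unique dominating neighbour, and splitting the single $\Delta(6,10,15)$ component guaranteed by Remark~\ref{remark:exception}. Your ``union of stars'' description of the comparability graph is just a repackaging of the paper's dichotomy between strong vertices and weak vertices of the first and second type together with the map $\tau$, so no substantive difference arises.
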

\begin{proof}
Let $V'\subset V(\Gamma)$ be the set of strong vertices of $\Gamma$,
and $V''=V(\Gamma)\setminus V'$ be the set of weak vertices.
If $\Gamma$ does not
contain a subgraph $\Delta(6,10,15)$, put
$$
V_1'=\{v\in V'\mid \alpha_{\Gamma}(v) \text{\ divides\ } d_1\}.
$$
If $\Gamma$ contains a subgraph $\Delta(6,10,15)$, then
it is easy to see that both $d_1$ and $d_2$ are divisible
by $\lcm\Delta(6,10,15)=30$. In this case we put
$$
V_1'=\{v\in V'\setminus V\big(\Delta(6,10,15)\big)\mid
\alpha_{\Gamma}(v) \text{\ divides\ } d_1\}\cup \{v_1\},
$$
where $v_1$ is an arbitrarily chosen vertex of
$\Delta(6,10,15)$.
We also put $V_2'=V'\setminus V_1'$.
It follows from the definition of a WCI-graph that
for every $v\in V_2'$ the number $\alpha_{\Gamma}(v)$ divides $d_2$.

For every weak vertex $v$ of $\Gamma$ denote by $\tau(v)$ the unique
vertex of $\Gamma$ connected to $v$ by an edge.
It follows from the definition of a WP-graph that either
$\alpha_{\Gamma}(\tau(v))>\alpha_{\Gamma}(v)$, so that
$\tau(v)$ is a strong vertex of $\Gamma$, or
$\alpha_{\Gamma}(\tau(v))=\alpha_{\Gamma}(v)$, so that
$v$ and $\tau(v)$ are both weak vertices. In the latter case the vertices
$v$ and $\tau(v)$ together
with the edge connecting them form a connected component of $\Gamma$
(note however that $v$ and $\tau(v)$ together with the corresponding edge
may form a connected component of $\Gamma$ if $\tau(v)$ is a strong vertex as well).
Let us refer to the former vertices as \emph{weak vertices of the
first type},
and to the latter vertices as \emph{weak vertices of the second type}.
In both cases it follows from the definition of a WCI-graph that
the degrees $d_1$ and $d_2$ are divisible by $\alpha_{\Gamma}(v)$.
Let $V_1''$ be the set of all weak vertices of
the first type such that $\tau(v)\in V_2'$,
and $V_2''$ be the set of all weak vertices of
the first type such that $\tau(v)\in V_1'$.
Finally, let $\tilde{V}_1''$ and $\tilde{V}_2''$ be
sets of weak vertices of the second type
each containing one and
only one vertex from each pair connected by an edge.

Put
$$
V_1=V_1'\cup V_1''\cup \tilde{V}_1'',\quad
V_1=V_2'\cup V_2''\cup \tilde{V}_2''.
$$
Then for
every $v\in V_1$ the number $\alpha_{\Gamma}(v)$
divides $d_1$, and for
every $v\in V_2$ the number $\alpha_{\Gamma}(v)$
divides $d_2$.
The graphs $\Gamma_1$ and $\Gamma_2$ are WP-graphs since they are
complete subgraphs of a WP-graph.
None of them contains a subgraph $\Delta(6,10,15)$; indeed, if one of them does, then
$\Delta(6,10,15)$ is also a subgraph of $\Gamma$, and all three vertices
of $\Delta(6,10,15)$ cannot simultaneously appear as vertices of any of $\Gamma_i$
by construction.
We also see that $\lcm\Gamma_i$ divides~$d_i$.
Moreover, if $v\in V_1$ (respectively,
$v\in V_2$) is a weak vertex of $\Gamma$, then
$\tau(v)\in V_2$ (respectively, $\tau(v)\in V_1$).
This means that the graphs $\Gamma_1$ and $\Gamma_2$ do not
have weak vertices themselves, because any weak vertex of $\Gamma_i$
would also be a weak vertex of~$\Gamma$.
\end{proof}

\begin{example}\label{example:70-17-splitting}
Let $\Gamma$ be a WP-graph from
Figure~\ref{figure2}.
The vertex of $\Gamma$ labelled by weight~$7$ is a weak vertex of the first type,
while the two vertices labelled by weight~$17$ are weak vertices of the second type.
All other vertices are strong.
The WP-graph $\Gamma$ can be considered as a WCI-graph of codimension $2$
and bidegree $(d,d)$, where
$$
d=2\cdot 3\cdot 5\cdot 7\cdot 17=3570.
$$
Following the proof of Lemma~\ref{lemma:two-subgraphs},
one forms the set $V_1'$ that consists of the vertices labelled by $70$, $15$, and $6$,
the set $V_2''$ that consists of the vertex labelled by $7$, the sets
$\tilde{V}_1''$ and $\tilde{V}_2''$ each consisting of one vertex labelled by $17$,
and puts $V_2'=V_1''=\varnothing$.
\end{example}

\begin{corollary}
\label{corollary:codim-2-graphs}
Let $\Gamma$ be a WCI-graph of codimension $2$ and bidegree $(d_1,d_2)$.
Then the set of vertices $V(\Gamma)$ is a disjoint union
$V(\Gamma)=V_1\sqcup V_2$
such that
$$
\sum\limits_{v\in V_i}\alpha_{\Gamma}(v)\le d_i.
$$
\end{corollary}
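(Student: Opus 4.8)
The plan is to assemble the corollary directly from the two main results already proved: the structural splitting of Lemma~\ref{lemma:two-subgraphs} and the comparison $\lcm\ge\Sigma$ of Corollary~\ref{corollary:lcm}. The only extra ingredient is the elementary observation that a divisor of a positive integer does not exceed it.

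First I would invoke Lemma~\ref{lemma:two-subgraphs} to obtain a disjoint decomposition $V(\Gamma)=V_1\sqcup V_2$ for which the complete subgraphs $\Gamma_1$ and $\Gamma_2$ on $V_1$ and $V_2$ are WP-graphs without weak vertices, neither of which contains a connected component $\Delta(6,10,15)$, and for which $\lcm\Gamma_i$ divides $d_i$. The point of using this lemma is precisely that it manufactures the two hypotheses needed downstream: the absence of weak vertices, so that Corollary~\ref{corollary:lcm} is applicable, and the exclusion of $\Delta(6,10,15)$ as a component, so that the single exceptional case of that corollary is ruled out.

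Next, noting that $\sum_{v\in V_i}\alpha_{\Gamma}(v)=\Sigma\Gamma_i$ by the definition of $\Sigma\Gamma_i$, I would apply Corollary~\ref{corollary:lcm} to each $\Gamma_i$. Since $\Gamma_i$ does not even contain $\Delta(6,10,15)$ as a connected component, it is in particular not equal to $\Delta(6,10,15)$, so the corollary yields $\Sigma\Gamma_i\le\lcm\Gamma_i$. Because $\lcm\Gamma_i$ divides the positive integer $d_i$, one has $\lcm\Gamma_i\le d_i$, and chaining the inequalities gives $\sum_{v\in V_i}\alpha_{\Gamma}(v)=\Sigma\Gamma_i\le\lcm\Gamma_i\le d_i$ for each $i$, which is exactly the assertion.

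Since both deep inputs have already been isolated, I do not expect a real obstacle here; the argument is a short assembly, and the work has effectively been front-loaded into Lemma~\ref{lemma:two-subgraphs} and Corollary~\ref{corollary:lcm}. The only points requiring a little care are the bookkeeping ensuring that neither $\Gamma_i$ is literally $\Delta(6,10,15)$ (this is where the stronger ``no $\Delta(6,10,15)$ component'' conclusion of Lemma~\ref{lemma:two-subgraphs} is used) and the degenerate case in which some $V_i$ is empty: then $\Gamma_i$ is not a WP-graph and Corollary~\ref{corollary:lcm} does not formally apply, but $\sum_{v\in V_i}\alpha_{\Gamma}(v)=0\le d_i$ holds trivially, so this case causes no trouble.
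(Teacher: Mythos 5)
Your proposal is correct and follows essentially the same route as the paper: invoke Lemma~\ref{lemma:two-subgraphs} to get the splitting with $\lcm\Gamma_i$ dividing $d_i$, then chain $d_i\ge\lcm\Gamma_i\ge\Sigma\Gamma_i$ via Corollary~\ref{corollary:lcm}. Your extra remarks on the empty-$V_i$ case and on why $\Gamma_i\neq\Delta(6,10,15)$ are sensible but not needed beyond what the paper already records.
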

\begin{proof}
Choose $V_1$ and $V_2$ as in Lemma~\ref{lemma:two-subgraphs},
and let $\Gamma_1$ and $\Gamma_2$ be the complete subgraphs
of $\Gamma$ with vertices $V_1$ and $V_2$. We know that
$d_i$ is divisible by $\lcm\Gamma_i$.
By Corollary~\ref{corollary:lcm}
one has
$$
d_i\ge \lcm\Gamma_i\ge \Sigma\Gamma_i=
\sum\limits_{v\in V_i}\alpha_{\Gamma}(v).
$$
\end{proof}

\begin{example}\label{example:70-17-graphs}
Let $\Gamma$ be a WP-graph from
Figure~\ref{figure2} considered as a WCI-graph of codimension $2$
and bidegree $(3570,3570)$, see Example~\ref{example:70-17-splitting}.
Then one can take $\Gamma_1$ to be the graph with two connected components,
one of them a triangle with vertices labelled by $70$, $15$, and $6$ together with the edges connecting them, and the other a single point labelled by $17$, while $\Gamma_2$ will be a graph with two
connected components, each of them just a single
point, one labelled by $7$ and the other by~$17$.
\end{example}

\section{Proof of the main theorem}
\label{section:proof}

In this section we prove Theorem~\ref{theorem:main} and make some
remarks about its possible generalizations.

\begin{proof}[Proof of Theorem~\ref{theorem:main}]
Let $X$ be a weighted complete intersection of hypersurfaces of degrees $d_1$ and $d_2$ in
$\P(a_0,\ldots,a_n)$.
Since $X$ is smooth and well formed, by \cite[Lemma~2.15]{PrzyalkowskiShramov-Weighted}
for every $k$ and every choice of
$k$ weights
$$
a_{i_1},\ldots, a_{i_k}, 0\le i_1<\ldots<i_k\le n,
$$
whose greatest common divisor $\delta$
is greater than~$1$, there exist $k$ degrees
$$
d_{s_1},\ldots,d_{s_k},
1\le s_1<\ldots<s_k\le 2,
$$
whose greatest common divisor is divisible by $\delta$.
In particular,
any three weights $a_{i_1}, a_{i_2}, a_{i_3}$ are coprime.

We may assume that
$$
1=a_0=\ldots=a_p<a_{p+1}\le\ldots\le a_n.
$$
Let $\Gamma$ be a WP-graph defined as follows.
The vertices of $\Gamma$ are $v_{p+1},\ldots,v_n$,
and two vertices $v_i$ and $v_j$ are connected by an edge
if and only if the weights $a_i$ and $a_j$ are not coprime.
Furthermore, we put $\alpha_{\Gamma}(v_i)=a_i$.
It is easy to see that $\Gamma$ is a WP-graph. Moreover,
$\Gamma$ is a WCI-graph of codimension $2$ and bidegree $(d_1,d_2)$.
By Corollary~\ref{corollary:codim-2-graphs}
there are two disjoint sets $V_1$ and $V_2$ such that
$$
V_1\sqcup V_2=\{p+1,\ldots, n\}
$$
and $\sum_{j\in V_i}a_j\le d_i$ for $i=1,2$.
Since $X$ is Fano, we have
\begin{equation}\label{eq:index}
\sum_{i=0}^n a_j>d_1+d_2,
\end{equation}
see \cite[Theorem 3.3.4]{Do82} or \cite[6.14]{IF00}.
This implies that
one can add the indices of several unit weights, i.e. some indices
from $\{0,\ldots,p\}$, to the sets $V_1$ and $V_2$ to form two disjoint
subsets
$S_1\supset V_1$ and $S_2\supset V_2$
of $\{0,\ldots,n\}$ such that
$\sum_{j\in S_i}a_j=d_i$ for $i=1,2$.
Moreover, since the inequality in~\eqref{eq:index} is strict,
we conclude that the set
$$
S_0=\{0,\ldots,n\}\setminus (S_1\cup S_2)
$$
is not empty. All weights $a_i$ with indices $i\in S_0$ equal $1$, so that the nef partition
$$
\{0,\ldots,n\}=S_0\sqcup S_1\sqcup S_2
$$
is nice.
\end{proof}

\begin{example}\label{example:70-17-partitions}
Let $X$ be a complete intersection of two hypersurfaces of degree~$3570$ in~\mbox{$\P(1^k,6,15,70,7,17,17)$},
where $1^k$ stands for $1$ repeated $k$ times. This is a well formed
Fano weighted complete intersections if $k$ is large enough (and $X$ is general).
Example~\ref{example:70-17-graphs} provides a nice nef partition for~$X$.
Of course, there are many more nice nef partitions in this case. Note that~$X$ is smooth
if it is general enough.
\end{example}

If $X\subset\P(a_0,\ldots,a_n)$ is a smooth well formed Fano weighted hypersurface,
then the corresponding WP-graph $\Gamma$ has no edges at all.
Thus the inequality $\lcm\Gamma\ge\Sigma\Gamma$ is obvious in this case,
and similarly to the proof of Theorem~\ref{theorem:main} we immediately obtain a nice
nef partition for $X$. This recovers the result of Theorem~\ref{theorem:hypersurfaces}.
Also, the proof of Theorem~\ref{theorem:main} gives the following by-product
(cf. \cite[Lemma~3.3]{PrzyalkowskiShramov-Weighted}).

\begin{corollary}
\label{corollary:units}
Let $X$ be a smooth well formed Fano weighted complete intersection of hypersurfaces
$d_1,\ldots,d_c$ in the weighted projective space $\P(a_0,\ldots,a_n)$.
Suppose that~\mbox{$c\le 2$}. Then the number of indices $i\in\{0,\ldots,n\}$
such that $a_i=1$ is at least~\mbox{$I(X)=\sum a_i-\sum d_j$}.
\end{corollary}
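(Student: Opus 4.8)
The plan is to extract this as a direct numerical by-product of the nice nef partition constructed in the proof of Theorem~\ref{theorem:main} (for $c=2$) and of the analogous construction for hypersurfaces sketched just above (for $c=1$). First I would apply that construction to obtain a nice nef partition
$$
\{0,\ldots,n\}=S_0\sqcup S_1\sqcup\ldots\sqcup S_c,
$$
where $\sum_{j\in S_i}a_j=d_i$ for $i=1,\ldots,c$. For $c=2$ this is exactly what the proof of Theorem~\ref{theorem:main} produces via Corollary~\ref{corollary:codim-2-graphs}; for $c=1$ the associated WP-graph has no edges, so its non-unit vertices are pairwise coprime and each divides $d_1$, whence $\sum_{a_j>1}a_j\le\prod_{a_j>1}a_j\le d_1$, and the same argument yields a nice nef partition.

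The key structural observation is that every index in $S_0$ carries weight $1$. Indeed, in the construction the sets $S_1,\ldots,S_c$ are obtained by enlarging the sets $V_1,\ldots,V_c$ — which already contain all the non-unit indices — by adjoining \emph{only} indices of unit weight, in order to raise the partial sums up to the prescribed degrees $d_i$. Consequently all the indices left over, namely those forming $S_0$, satisfy $a_i=1$. This is the one point I would make sure to state explicitly, although it is immediate from the shape of the construction.

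Granting this, the conclusion is pure bookkeeping. Summing the defining relations of the partition gives
$$
\sum_{i=0}^n a_i=\sum_{j\in S_0}a_j+\sum_{i=1}^c d_i,
$$
and since $a_j=1$ for each $j\in S_0$, the first summand on the right equals $|S_0|$. Therefore
$$
|S_0|=\sum a_i-\sum d_j=I(X).
$$
Because $S_0$ is contained in the set of indices of unit weight, the total number of such indices is at least $|S_0|=I(X)$, as claimed. I do not anticipate any genuine obstacle: all the real work has already been done in Theorem~\ref{theorem:main}, and the corollary is obtained simply by identifying $S_0$ with a set of unit weights of cardinality exactly $I(X)$.
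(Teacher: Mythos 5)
Your proposal is correct and is essentially the paper's own argument: the authors present this corollary precisely as a by-product of the proof of Theorem~\ref{theorem:main} (and of the edge-free WP-graph observation for $c=1$), namely that the leftover set $S_0$ consists entirely of unit-weight indices and has cardinality exactly $\sum a_i-\sum d_j$. Nothing is missing.
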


Actually, the assertion of Corollary~\ref{corollary:units} holds
in the case of arbitrary codimension, see~\mbox{\cite[Corollary~5.11]{PST}}.

If $X$ is a smooth well formed Calabi--Yau weighted complete intersection
of codimension~$1$ or~$2$, we can argue in the same way as in the proof of Theorem~\ref{theorem:main}
to show that there exists a nef partition for $X$, for which we necessarily have $S_0=\varnothing$ in
the notation of Definition~\ref{definition:nef partition}.
Constructing the dual nef partition we obtain a Calabi--Yau variety $Y$ that is mirror dual to $X$, see~\cite{BB96}. In the same paper
it is proved that the Hodge-theoretic mirror symmetry
holds for $X$ and $Y$. That is, for a given variety $V$ one can define \emph{string Hodge numbers} $h_{st}^{p,q}(V)$
as Hodge numbers of a crepant resolution of $V$ if such resolution exists.
Then, for $n=\dim X=\dim Y$, one has $h_{st}^{p,q}(X)=h_{st}^{n-p,q}(Y)$ provided that the ambient toric variety (weighted projective
space in our case) is Gorenstein.

Finally, we would like to point out a possible approach to a proof
of Conjecture~\ref{conjecture:main} along the lines of the current paper.
If $X$ is a smooth well formed Fano weighted complete intersection
of codimension $3$ or higher in a weighted projective space $\P=\P(a_0,\ldots,a_n)$,
it is possible that some three weights $a_{i_1}$, $a_{i_2}$, and $a_{i_3}$
are not coprime. Thus a WP-graph constructed in the proof of
Theorem~\ref{theorem:main} does not provide an adequate
description of singularities of the weighted projective space~$\P$.
An obvious way to (try to) cope with this
is to replace a graph by a simplicial complex that would remember the greatest common divisors
of arbitrary subsets of weights~$a_i$ in Definition~\ref{definition:WP-graph}. However, this leads to combinatorial difficulties
that we cannot overcome at the moment. Except for the most straightforward ones,
like the effects on weak vertices (which would be not that easy to control) and
possibly larger number of exceptions analogous to our WP-graph~\mbox{$\Delta(6,10,15)$},
there is also a less obvious one (which is in fact easy to deal with). Namely, we
need a finer information about weights and degrees than that provided by
\cite[Lemma~2.15]{PrzyalkowskiShramov-Weighted}.

\begin{example}
Let $X$ be a weighted complete intersection of hypersurfaces of degrees
$2$, $3$, $5$, and $30$ in $\P(1^k,6,10,15)$, where $1^k$ stands for $1$ repeated $k$ times.
Then $X$ is a well formed
Fano weighted complete intersection
provided that $k$ is large and~$X$ is general. Note that the conclusion of
\cite[Lemma~2.15]{PrzyalkowskiShramov-Weighted} holds for $X$. However, it is easy to see
that~$X$ is not smooth. Moreover, there is no nef partition for~$X$.
\end{example}

In any case, it is easy to see that the actual information
one can deduce from the fact that a weighted complete intersection is
smooth is much stronger than that provided by \cite[Lemma~2.15]{PrzyalkowskiShramov-Weighted}.
We also expect that combinatorial difficulties that one has to face on
the way to the proof of Conjecture~\ref{conjecture:main} proposed above are possible
to overcome.

\section{Fano four- and fivefolds}
\label{section:4-folds}

Smooth well formed Fano weighted complete intersections of dimensions $2$ and $3$ are known and well studied (see, for instance,~\cite{IP99}),
as well as their toric Landau--Ginzburg models (see, for instance,~\cite{LP16} and~\cite{Prz13}).
In this section we write down nef partitions and weak Landau--Ginzburg models
for four- and five-dimensional smooth well formed Fano weighted complete intersections. Some of them have codimension greater than~$2$, which gives
additional evidence for Conjecture~\ref{conjecture:main}.
Providing such list is possible due to classification of smooth Fano weighted complete intersections
obtained in~\mbox{\cite[\S5]{PrzyalkowskiShramov-Weighted}}, because finding all nef partitions
for a given complete intersection requires just a simple (though a bit lengthy) computation.

In Tables~\ref{table:Fano-dim-4} and~\ref{table:Fano-dim-5} below we list nef partitions
and corresponding weak Landau--Ginzburg models of four- and five-dimensional smooth well formed Fano
weighted complete intersections that are not intersections with linear cones, see \cite[\S2]{PrzyalkowskiShramov-Weighted} for definitions. These weighted complete
intersections were classified in~\mbox{\cite[\S5]{PrzyalkowskiShramov-Weighted}}, see also \cite[Proposition~2.2.1]{Kuchle-Geography},
where the case of dimension~$4$ was originally established.
In the first column of Tables~\ref{table:Fano-dim-4} and~\ref{table:Fano-dim-5} we put
the number of the family according to tables
in~\mbox{\cite[\S5]{PrzyalkowskiShramov-Weighted}}.
The second column describes the weighted projective spaces
where the weighted complete intersections live.
Here we use the abbreviation
\begin{equation*}
(a_0^{k_0},\ldots,a_m^{k_m})=
(\underbrace{a_0,\ldots,a_0}_{k_0\ \text{times}},\ldots,\underbrace{a_m,\ldots,a_m}_{k_m\ \text{times}}),
\end{equation*}
where $k_0,\ldots,k_m$ are
any positive integers. If some of $k_i$ is equal to $1$ we drop it for simplicity.
In the third column we put the degrees of weighted hypersurfaces that cut out our complete intersections.
The forth column describes nice nef partitions; note that in general there are many of them in every case, but we
do not distinguish between nef partitions obtained by permuting indices corresponding to equal weights. In the fifth column
we write down the corresponding Landau--Ginzburg models.
The latter are obtained using formula~\eqref{eq:LG-model}, where instead of variables
$x_{0,j}$, $x_{1,j}$, $x_{2,j}$, \ldots, we use variables $t_j$, $x_j$, $y_j$, \ldots, respectively, to simplify notation.
We exclude four- and five-dimensional projective spaces (which are complete intersections of codimension~$0$ in themselves) from the tables to unify them with tables from~\mbox{\cite[\S5]{PrzyalkowskiShramov-Weighted}}.

\begin{footnotesize}
\def\arraystretch{1.9}
\begin{center}
\begin{longtable}{||c|c|c|c|c||}
  \hline
  No. & $\P$ & Degrees & Nef partitions & Weak Landau--Ginzburg models
  \\
  \hline
  \hline
  \endhead
\multirow{2}{*}{1} & \multirow{2}{*}{$\PP(1^3,2^2,3^2)$} & \multirow{2}{*}{6,6} &
$\{0\}\sqcup\{1,2,3,4 \}\sqcup\{5,6\}$ &
$\frac{(x_1+x_2+x_3+1)^6(y_1+1)^6}{x_1x_2x_3^2y_1^3}$ \\
& & &
$\{0\}\sqcup\{1,3,5 \}\sqcup\{2,4,6\}$ &
$\frac{(x_1+x_2+1)^6(y_1+y_2+1)^6}{x_1x_2^2y_1y_2^2}$
\\\hline
2 & $\PP(1^4,2,5)$ & 10 & $\{0\}\sqcup\{ 1,2,3,4,5\}$ & $\frac{(x_1+x_2+x_3+x_4+1)^{10}}{x_1x_2x_3x_4^2}$
\\\hline
\multirow{2}{*}{3} & \multirow{2}{*}{$\PP(1^4,2^2,3)$} & \multirow{2}{*}{4,6} &
$\{0\}\sqcup\{ 1,2,4\}\sqcup\{3,5,6\}$ &  $\frac{(x_1+x_2+1)^4(y_1+y_2+1)^6}{x_1x_2y_1y_2^2}$\\
& & & $\{0\}\sqcup\{ 4,5\}\sqcup \{1,2,3,6\}$ & $\frac{(x_1+1)^4(y_1+y_2+y_3+1)^6}{x_1^2y_1y_2y_3}$
\\\hline
4 & $\PP(1^5,4)$ & 8 & $\{0\}\sqcup\{ 1,2,3,4,5\}$ & $\frac{(x_1+x_2+x_3+x_4+1)^8}{x_1x_2x_3x_4}$
\\\hline
5 & $\PP(1^5,2)$ & 6 & $\{0\}\sqcup\{ 1,2,3,4,5\}$ & $\frac{(x_1+x_2+x_3+x_4+1)^6}{x_1x_2x_3x_4}$
\\\hline
\multirow{2}{*}{6} & \multirow{2}{*}{$\PP(1^5,2^2)$} & \multirow{2}{*}{4,4} &
$\{0\}\sqcup\{ 1,2,3,4\}\sqcup\{5,6\}$ & $\frac{(x_1+x_2+x_3+1)^4(y_1+1)^4}{x_1x_2x_3y_1^2}$ \\
& & &
$\{0\}\sqcup\{ 1,2,5\}\sqcup\{3,4,6\}$ & $\frac{(x_1+x_2+1)^4(y_1+y_2+1)^4}{x_1x_2y_1y_2}$
\\\hline
7 & $\PP(1^6,3)$ & 2,6 & $\{0\}\sqcup\{1,2 \}\sqcup \{3,4,5,6\}$ & $\frac{(x_1+1)^2(y_1+y_2+y_3+1)^6}{x_1y_1y_2y_3}$
\\\hline
8 & $\PP^5$ & 5 & $\{0\}\sqcup\{ 1,2,3,4,5\}$ & $\frac{(x_1+x_2+x_3+x_4+1)^4}{x_1x_2x_3x_4}$
\\\hline
\multirow{2}{*}{9} & \multirow{2}{*}{$\PP(1^6,2)$} & \multirow{2}{*}{3,4} &
$\{0\}\sqcup\{1,2,3 \}\sqcup \{4,5,6\}$ & $\frac{(x_1+x_2+1)^3(y_1+y_2+1)^4}{x_1x_2y_1y_1}$\\
& & &
$\{0\}\sqcup\{1,6 \}\sqcup \{2,3,4,5\}$ & $\frac{(x_1+1)^3(y_1+y_2+y_3+1)^4}{x_1y_1y_2y_3}$
\\\hline
10 & $\PP^6$ & 2,4 & $\{0\}\sqcup\{ 1,2\}\sqcup \{3,4,5,6\}$ & $\frac{(x_1+1)^2(y_1+y_2+y_3+1)^4}{x_1y_1y_2y_3}$
\\\hline
11 & $\PP^6$ & 3,3 & $\{0\}\sqcup\{ 1,2,3\}\sqcup \{4,5,6\}$ & $\frac{(x_1+x_2+1)^3(y_1+y_2+1)^3}{x_1x_2y_1y_2}$
\\\hline
12 & $\PP^7$ & 2,2,3 & $\{0\}\sqcup\{1,2\}\sqcup\{3,4\}\sqcup \{5,6,7\}$ & $\frac{(x_1+1)^2(y_1+1)^2(z_1+z_2+1)^3}{x_1y_1z_1z_2}$
\\\hline
13 & $\PP^8$ & 2,2,2,2 & $\{0\}\sqcup\{1,2\}\sqcup \{3,4\}\sqcup \{5,6\}\sqcup \{7,8\}$ & $\frac{(x_1+1)^2(y_1+1)^2(z_1+1)^2(u_1+1)^2}{x_1y_1z_1u_1}$
\\\hline
14 & $\PP(1^5,3)$ & 6 & $\{0,1\}\sqcup\{ 2,3,4,5\}$ & $\frac{(x_1+x_2+x_3+1)^6}{x_1x_2x_3t_1}+t_1$
\\\hline
15 & $\PP^5$ & 4 & $\{0,1\}\sqcup\{2,3,4,5\}$ & $\frac{(x_1+x_2+x_3+1)^4}{x_1x_2x_3t_1}+t_1$
\\\hline
16 & $\PP^6$ & 2,3 & $\{0,1\}\sqcup\{2,3\}\sqcup\{4,5,6\}$ & $\frac{(x_1+1)^2(y_1+y_2+1)^3}{x_1y_1y_2t_1}+t_1$
\\\hline
17 & $\PP^7$ & 2,2,2 & $\{0,1\}\sqcup\{2,3\}\sqcup\{4,5\}\sqcup\{6,7\}$ & $\frac{(x_1+1)^2(y_1+1)^2(y_1+1)^2}{x_1y_1z_1t_1}+t_1$
\\\hline
\multirow{2}{*}{18} & \multirow{2}{*}{$\PP(1^4,2,3)$} & \multirow{2}{*}{6} &
$\{0,1,2\}\sqcup\{3,4,5 \}$ & $\frac{(x_1+x_2+1)^6}{x_1x_2^2t_1t_2}+t_1+t_2$\\
& & &
$\{0,4\}\sqcup\{1,2,3,5 \}$ & $\frac{(x_1+x_2+x_3+1)^6}{x_1x_2x_3t_1^2}+t_1$
\\\hline
\multirow{2}{*}{19} & \multirow{2}{*}{$\PP(1^5,2)$} & \multirow{2}{*}{4} &
$\{0,1,2\}\sqcup\{3,4,5 \}$ & $\frac{(x_1+x_2+1)^4}{x_1x_2t_1t_2}+t_1+t_2$\\
& & &
$\{0,5\}\sqcup\{1,2,3,4 \}$ & $\frac{(x_1+x_2+x_3+1)^4}{x_1x_2x_3t_1^2}+t_1$
\\\hline
20 & $\PP^5$ & 3 & $\{0,1,2\}\sqcup\{3,4,5\}$ & $\frac{(x_1+x_2+1)^3}{x_1x_2t_1t_2}+t_1+t_2$
\\\hline
21 & $\PP^6$ & 2,2 & $\{0,1,2\}\sqcup\{3,4\}\sqcup \{5,6\}$ & $\frac{(x_1+1)^2(y_1+1)^2}{x_1y_1t_1t_2}+t_1+t_2$
\\\hline
22 & $\PP^5$ & 2 & $\{0,1,2,3\}\sqcup\{ 4,5\}$ & $\frac{(x_1+1)^2}{x_1t_1t_2t_3}+t_1+t_2+t_3$
\\
  \hline
\caption[]{\label{table:Fano-dim-4} Fourfold Fano weighted complete intersections.}
\end{longtable}
\end{center}
\end{footnotesize}

\bigskip

\begin{scriptsize}
\def\arraystretch{1.9}
\begingroup
\setlength{\LTleft}{-20cm plus -1fill}
\setlength{\LTright}{\LTleft}
\begin{longtable}{||c|c|c|c|c||}
  \hline
  No. & $\P$ & Degrees & Nef partitions & Weak Landau--Ginzburg models
  \\
  \hline
  \hline
\endhead
\multirow{2}{*}{1} & \multirow{2}{*}{$\PP(1^5,2,3,3)$} & \multirow{2}{*}{$6,6$} &
$\{0\}\sqcup \{1,2,3,4,5\}\sqcup\{6,7\}$ & $\frac{(x_1+x_2+x_3+x_4+1)^6(y_1+1)^6}{x_1x_2x_3x_4y_1^3}$\\
& & &
$\{0\}\sqcup \{1,2,3,6\}\sqcup\{4,5,7\}$ & $\frac{(x_1+x_2+x_3+1)^6(y_1+y_2+1)^6}{x_1x_2x_3y_1y_2^2}$
 \\
  \hline
2 & $\PP(1^6,5)$ & $10$ & $\{0\}\sqcup \{1,2,3,4,5,6\}$ & $\frac{(x_1+x_2+x_3+x_4+x_5+1)^{10}}{x_1x_2x_3x_4x_5}$
 \\
  \hline
\multirow{3}{*}{3} & \multirow{3}{*}{$\PP(1^6,2,3)$} & \multirow{2}{*}{$4,6$} &
$\{0\}\sqcup \{1,2,3,4\}\sqcup\{5,6,7\}$ & $\frac{(x_1+x_2+x_3+1)^4(y_1+y_2+1)^6}{x_1x_2x_3y_1y_2^2}$\\
& & &
$\{0\}\sqcup \{1,7\}\sqcup\{2,3,4,5,6\}$ & $\frac{(x_1+1)^4(y_1+y_2+y_3+y_4+1)^6}{x_1y_1y_2y_3y_4}$\\
& & &
$\{0\}\sqcup \{1,2,6\}\sqcup\{3,4,5,7\}$ & $\frac{(x_1+x_2+1)^4(y_1+y_2+y_3+1)^6}{x_1x_2y_1y_2y_3}$
 \\
  \hline
4 & $\PP(1^7,4)$ & $2,8$ & $\{0\}\sqcup \{1,2\}\sqcup\{3,4,5,6,7\}$ & $\frac{(x_1+1)^2(y_1+y_2+y_3+y_4+1)^8}{x_1y_1y_2y_3y_4}$
 \\
  \hline
5 & $\PP^6$ & $6$ & $\{0\}\sqcup \{1,2,3,4,5,6\}$ & $\frac{(x_1+x_2+x_3+x_4+x_5+1)^6}{x_1x_2x_3x_4x_5}$
 \\
  \hline
6 & $\PP(1^7,2)$ & $4,4$ & $\{0\}\sqcup \{1,2,3,4\}\sqcup\{5,6,7\}$ & $\frac{(x_1+x_2+x_3+1)^4(y_1+y_2+1)^4}{x_1x_2x_3y_1y_2}$
 \\
   \hline
7 & $\PP(1^8,3)$ & 2,2,6 & $\{0\}\sqcup \{1,2\}\sqcup\{3,4\}\sqcup\{5,6,7,8\}$ & $\frac{(x_1+1)^2(y_1+1)^2(z_1+z_2+z_3+1)^6}{x_1y_1z_1z_2z_3}$
\\
  \hline
8 & $\PP^7$ & $2,5$ & $\{0\}\sqcup \{1,2\}\sqcup\{3,4,5,6,7\}$ & $\frac{(x_1+1)^2(y_1+y_2+y_3+y_4+1)^5}{x_1y_1y_2y_3y_4}$
 \\
  \hline
9 & $\PP^7$ & $3,4$ & $\{0\}\sqcup \{1,2,3\}\sqcup\{4,5,6,7\}$ & $\frac{(x_1+x_2+1)^3(y_1+y_2+y_3+1)^4}{x_1x_2y_1y_2y_3}$
 \\
   \hline
10 & $\PP^8$ & 2,2,4 & $\{0\}\sqcup \{1,2\}\sqcup\{3,4\}\sqcup\{5,6,7,8\}$ & $\frac{(x_1+1)^2(y_1+1)^2(z_1+z_2+z_3+1)^4}{x_1y_1z_1z_2z_3}$
\\
   \hline
11 & $\PP^8$ & 2,3,3 & $\{0\}\sqcup \{1,2\}\sqcup \{3,4,5\}\sqcup \{6,7,8\}$ & $\frac{(x_1+1)^2(y_1+y_2+1)^3(z_1+z_2+1)^3}{x_1y_1y_2z_1z_2}$
\\
   \hline
12 & $\PP^9$ & 2,2,2,3 & $\{0\}\sqcup \{1,2\}\sqcup\{3,4\}\sqcup\{5,6\}\sqcup\{7,8,9\}$ & $\frac{(x_1+1)^2(y_1+1)^2(z_1+1)^2(u_1+u_2+1)^3}{x_1y_1z_1u_1u_2}$
\\
\hline
13 & $\PP^{10}$ & 2,2,2,2,2 & $\{0\}\sqcup \{1,2\}\sqcup\{3,4\}\sqcup\{5,6\}\sqcup\{7,8\}\sqcup\{9,10\}$ & $\frac{(x_1+1)^2(y_1+1)^2(z_1+1)^2(u_1+1)^2(v_1+1)^2}{x_1y_1z_1u_1v_1}$
\\
  \hline
\multirow{2}{*}{14} & \multirow{2}{*}{$\PP(1^4,2,2,3,3)$} & \multirow{2}{*}{$6,6$} &
$\{0,1\}\sqcup \{2,3,4,5\}\sqcup\{6,7\}$ & $\frac{(x_1+x_2+x_3+1)^6(y_1+1)^6}{x_1x_2x_3^2y_1^3t_1}+t_1$\\
& & &
$\{0,1\}\sqcup \{2,4,6\}\sqcup\{3,5,7\}$ & $\frac{(x_1+x_2+1)^6(y_1+y_2+1)^6}{x_1x_2^2y_1y_2^2t_1}+t_1$
 \\
  \hline
15 & $\PP(1^5,2,5)$ & $10$ & $\{0,1\}\sqcup \{2,3,4,5,6\}$ & $\frac{(x_1+x_2+x_3+x_4+1)^{10}}{x_1x_2x_3x_4^2t_1}+t_1$
 \\
  \hline
\multirow{3}{*}{16} & \multirow{3}{*}{$\PP(1^5,2,2,3)$} & \multirow{3}{*}{$4,6$} &
$\{0,1\}\sqcup \{2,3,5\}\sqcup\{4,6,7\}$ & $\frac{(x_1+x_2+1)^4(y_1+y_2+1)^6}{x_1x_2y_1y_2^2t_1}+t_1$\\
& & &
$\{0,1\}\sqcup \{2,7\}\sqcup\{3,4,5,6\}$ & $\frac{(x_1+1)^4(y_1+y_2+y_3+1)^6}{x_1y_1y_2y_3^2t_1}+t_1$\\
& & &
$\{0,1\}\sqcup \{5,6\}\sqcup\{2,3,4,7\}$ & $\frac{(x_1+1)^4(y_1+y_2+y_3+1)^6}{x_1^2y_1y_2y_3t_1}+t_1$
 \\
  \hline
17 & $\PP(1^6,4)$ & $8$ & $\{0,1\}\sqcup \{2,3,4,5,6\}$ & $\frac{(x_1+x_2+x_3+x_4+1)^8}{x_1x_2x_3x_4t_1}+t_1$
 \\
  \hline
18 & $\PP(1^6,2)$ & $6$ & $\{0,1\}\sqcup \{2,3,4,5,6\}$ & $\frac{(x_1+x_2+x_3+x_4+1)^6}{x_1x_2x_3x_4t_1}+t_1$
 \\
  \hline
\multirow{2}{*}{19} & \multirow{2}{*}{$\PP(1^6,2,2)$} & \multirow{2}{*}{$4,4$} &
$\{0,1\}\sqcup \{2,3,4,5\}\sqcup\{6,7\}$ & $\frac{(x_1+x_2+x_3+1)^4(y_1+1)^4}{x_1x_2x_3y_1^2t_1}+t_1$\\
& & &
$\{0,1\}\sqcup \{2,3,6\}\sqcup\{4,5,7\}$ & $\frac{(x_1+x_2+1)^4(y_1+y_2+1)^4}{x_1x_2y_1y_2t_1}+t_1$
 \\
  \hline
20 & $\PP(1^7,3)$ & $2,6$ & $\{0,1\}\sqcup \{2,3\}\sqcup\{4,5,6,7\}$ & $\frac{(x_1+1)^2(y_1+y_2+y_3+1)^6}{x_1y_1y_2y_3t_1}+t_1$
 \\
  \hline
21 & $\PP^6$ & $5$ & $\{0,1\}\sqcup \{2,3,4,5,6\}$ & $\frac{(x_1+x_2+x_3+x_4+1)^5}{x_1x_2x_3x_4t_1}+t_1$
 \\
  \hline
\multirow{2}{*}{22} & \multirow{2}{*}{$\PP(1^7,2)$} & \multirow{2}{*}{$3,4$} &
$\{0,1\}\sqcup \{2,3,4\}\sqcup\{5,6,7\}$ & $\frac{(x_1+x_2+1)^3(y_1+y_2+1)^4}{x_1x_2y_1y_2t_1}+t_1$\\
& & &
$\{0,1\}\sqcup \{2,7\}\sqcup\{3,4,5,6\}$ & $\frac{(x_1+1)^3(y_1+y_2+y_3+1)^4}{x_1y_1y_2y_3t_1}+t_1$
 \\
  \hline
23 & $\PP^7$ & $2,4$ & $\{0,1\}\sqcup \{2,3\}\sqcup\{4,5,6,7\}$ & $\frac{(x_1+1)^2(y_1+y_2+y_3+1)^4}{x_1y_1y_2y_3t_1}+t_1$
  \\
\hline
24 & $\PP^7$ & 3,3 & $\{0,1\}\sqcup \{2,3,4\}\sqcup\{5,6,7\}$ & $\frac{(x_1+x_2+1)^3(y_1+y_2+1)^3}{x_1x_2y_1y_2t_1}+t_1$
 \\
    \hline
25 & $\PP^8$ & 2,2,3 & $\{0,1\}\sqcup \{2,3\}\sqcup\{4,5\}\sqcup\{6,7,8\}$ & $\frac{(x_1+1)^2(y_1+1)^2(z_1+z_2+1)^3}{x_1y_1z_1z_2t_1}+t_1$
\\
    \hline
26 & $\PP^9$ & 2,2,2,2 & $\{0,1\}\sqcup \{2,3\}\sqcup\{4,5\}\sqcup\{6,7\}\sqcup\{8,9\}$ & $\frac{(x_1+1)^2(y_1+1)^2(z_1+1)^2(u_1+1)^2}{x_1y_1z_1u_1t_1}+t_1$
\\
  \hline
27 & $\PP(1^6,3)$ & $6$ & $\{0,1,2\}\sqcup \{3,4,5,6\}$ & $\frac{(x_1+x_2+x_3+1)^6}{x_1x_2x_3 t_1t_2}+t_1+t_2$
 \\
  \hline
28 & $\PP^6$ & $4$ & $\{0,1,2\}\sqcup \{3,4,5,6\}$ & $\frac{(x_1+x_2+x_3+1)^4}{x_1x_2x_3 t_1t_2}+t_1+t_2$
 \\
  \hline
29 & $\PP^7$ & 2,3 & $\{0,1,2\}\sqcup \{3,4\}\sqcup\{5,6,7\}$ & $\frac{(x_1+1)^2(y_1+y_2+1)^3}{x_1y_1y_2 t_1t_2}+t_1+t_2$
\\
  \hline
30 & $\PP^8$ & 2,2,2 & $\{0,1,2\}\sqcup \{3,4\}\sqcup\{5,6\}\sqcup\{7,8\}$ & $\frac{(x_1+1)^2(y_1+1)^2(z_1+1)^2}{x_1y_1z_1 t_1t_2}+t_1+t_2$
\\
  \hline
\multirow{3}{*}{31} & \multirow{3}{*}{$\PP(1^5,2,3)$} & \multirow{3}{*}{$6$} &
$\{0,1,2,3\}\sqcup \{4,5,6\}$ & $\frac{(x_1+x_2+1)^6}{x_1x_2^2 t_1t_2t_3}+t_1+t_2+t_3$\\
& & &
$\{0,1,5\}\sqcup \{2,3,4,6\}$ & $\frac{(x_1+x_2+x_3+1)^6}{x_1x_2x_3 t_1t_2^2}+t_1+t_2$\\
& & &
$\{0,6\}\sqcup \{1,2,3,4,5\}$ & $\frac{(x_1+x_2+x_3+x_4+1)^6}{x_1x_2x_3x_4 t_1^3}+t_1$
 \\
  \hline
\multirow{2}{*}{32} & \multirow{2}{*}{$\PP(1^6,2)$} & \multirow{2}{*}{$4$} &
$\{0,1,2,3\}\sqcup \{4,5,6\}$ & $\frac{(x_1+x_2+1)^4}{x_1x_2 t_1t_2t_3}+t_1+t_2+t_3$\\
& & &
$\{0,1,6\}\sqcup \{2,3,4,5\}$ & $\frac{(x_1+x_2+x_3+1)^4}{x_1x_2x_3 t_1t_2^2}+t_1+t_2$
 \\
  \hline
33 & $\PP^6$ & $3$ & $\{0,1,2,3\}\sqcup \{4,5,6\}$ & $\frac{(x_1+x_2+1)^3}{x_1x_2 t_1t_2t_3}+t_1+t_2+t_3$
 \\
  \hline
34 & $\PP^7$ & $2,2$ & $\{0,1,2,3\}\sqcup \{4,5\}\sqcup\{6,7\}$ & $\frac{(x_1+1)^2(y_1+1)^2}{x_1y_1 t_1t_2t_3}+t_1+t_2+t_3$
 \\
  \hline
35 & $\PP^6$ & $2$ & $\{0,1,2,3,4\}\sqcup \{5,6\}$ & $\frac{(x_1+1)^2}{x_1 t_1t_2t_3t_4}+t_1+t_2+t_3+t_4$
 \\
  \hline
\caption[]{\label{table:Fano-dim-5} Fivefold Fano weighted complete intersections.}
\end{longtable}
\endgroup
\end{scriptsize}

\begin{remark}
The set $S_0$ in nef partitions obtained as in the proof of Theorem~\ref{theorem:main} consists of indices only of such variables
that have weight $1$. However some smooth well formed complete intersections may admit other nef partitions,
having non-trivial weights in $S_0$,
see for instance No.~$18$ and $19$ in Table~\ref{table:Fano-dim-4}, and
No.~$31$ and $32$ in Table~\ref{table:Fano-dim-5}.
\end{remark}

\begin{question}
One sees that varieties No. $1$, $3$, $6$, $9$, $18$, $19$ from Table~\ref{table:Fano-dim-4} and No. $1$, $14$, $19$, $22$, $32$ from Table~\ref{table:Fano-dim-5} have two different nice nef partitions, while varieties No.~$3$, $16$, and~$31$ from Table~\ref{table:Fano-dim-5} have three
different nice nef partitions. Thus they have two or three weak Landau--Ginzburg models given by these nef partitions.
In~\cite{Li16}  and~\cite{Clarke} (see also~\cite{Pr}) it is proved (under mild additional assumptions) that for complete intersections in Gorenstein toric varieties
Landau--Ginzburg models provided by different nef partitions are birational.
Does this hold for complete intersections in weighted projective spaces?
\end{question}

\begin{remark}
Varieties listed in Tables~\ref{table:Fano-dim-4} and~\ref{table:Fano-dim-5}
admit degenerations to toric varieties whose fan polytopes coincide with Newton polytopes of their weak Landau--Ginzburg models,
see~\cite{ILP13}. Most of them are complete intersections in usual projective spaces. Thus one can prove the existence of (log) Calabi--Yau
compactifications for them, see~\cite{Prz13},~\cite{PSh15a}, and~\cite{Prz17}. Moreover, their existence can be proved
for some other varieties: for variety No. $18$ from Table~\ref{table:Fano-dim-5} using a method from~\cite{PSh15a} and for varieties No. $18$, $19$ (for the second nef partition), $22$ (for the first nef partition), $27$, $32$ (for both nef partitions) from Table~\ref{table:Fano-dim-5}
using a method from~\cite{Prz17}. Thus one can prove that these varieties have toric Landau--Ginzburg models (listed in the last column of the tables).
\end{remark}

\begin{question}
In~\cite{KKP14} Landau--Ginzburg Hodge numbers are defined, see~\cite{LP16} for some discussion on this definition.
Using this definition in~\cite{KKP14} the authors formulated Hodge-theoretic Mirror Symmetry conjecture for Fano varieties
by an analogy with the conjecture for smooth Calabi--Yau varieties.
This conjecture was proved for del Pezzo surfaces in~\cite{LP16}.
One of Hodge numbers can be conjecturally interpreted via number of components of reducible fibers, see~\cite{Prz13} and~\cite{PSh15a}.
In~\cite{PSh15a} this conjecture was checked for complete intersections in usual projective spaces.
Does the Hodge-theoretic Mirror Symmetry conjecture hold for varieties listed in Tables~\ref{table:Fano-dim-4} and~\ref{table:Fano-dim-5}? Does one have an
interpretation via the number of irreducible components of reducible fibers in this case?
Does it hold for all Fano complete intersections in weighted projective spaces having nice nef partitions?
\end{question}

\end{document}